\newcommand{\C}{{\mathbb C}}       
\newcommand{\R}{{\mathbb R}}       
\newcommand{\N}{{\mathbb N}}       %
\newcommand{\Z}{{\mathbb Z}}       
\newcommand{\FF}{{\mathcal F}}
\newcommand{\HH}{{\mathcal H}}
\newcommand{\GZ}{{\mathcal G}}
\newcommand{\SSS}{{\mathcal S}}
\newcommand{\diam}{\mathop{\rm diam}}
\newcommand{\dist}{{\rm dist}}
\newcommand{\ve}{{\varepsilon}}
\newcommand{\vv}{{\vspace{2mm}}}
\newcommand{\vvv}{\vspace{4mm}}
\newcommand{\noi}{\noindent}
\newcommand{\HD}{{\mathsf{HD}}}
\newcommand{\LD}{{\mathsf{LD}}}
\newcommand{\tree}{{\mathsf{Tree}}}
\def\XXint#1#2#3{{\setbox0=\hbox{$#1{#2#3}{\int}$ }
\vcenter{\hbox{$#2#3$ }}\kern-.58\wd0}}
\newtheorem{theorem}{Theorem}[section]
\newtheorem{lemma}[theorem]{Lemma}
\newtheorem{corollary}[theorem]{Corollary}
\newtheorem{proposition}[theorem]{Proposition}
\newtheorem*{lemma*}{Lemma}
\newtheorem*{theorem*}{Theorem}
\theoremstyle{definition}
\theoremstyle{remark}
\newtheorem{rem}[theorem]{\bf Remark}
\numberwithin{equation}{section}
\newcommand{\RRem}{\begin{rem}}
\newcommand{\erem}{\end{rem}}
\def\@tocline#1#2#3#4#5#6#7{\relax
  \ifnum #1>\c@tocdepth 
  \else
    \par \addpenalty\@secpenalty\addvspace{#2}%
    \begingroup \hyphenpenalty\@M
    \@ifempty{#4}{%
      \@tempdima\csname r@tocindent\number#1\endcsname\relax
    }{%
      \@tempdima#4\relax
    }%
    \parindent\z@ \leftskip#3\relax \advance\leftskip\@tempdima\relax
    \rightskip\@pnumwidth plus4em \parfillskip-\@pnumwidth
    #5\leavevmode\hskip-\@tempdima
      \ifcase #1
       \or\or \hskip 1em \or \hskip 2em \else \hskip 3em \fi%
      #6\nobreak\relax
    \dotfill\hbox to\@pnumwidth{\@tocpagenum{#7}}\par
    \nobreak
    \endgroup
  \fi}
\def\cE{{\mathscr{E}}}
\def\cF{{\mathscr{F}}}
\def\cR{{\mathscr{R}}}
\def\cW{{\mathscr{W}}}
\begin{document}

\title[Carleson measure estimates for bounded harmonic functions] {Carleson measure estimates and $\boldsymbol\ve$-approximation for bounded harmonic functions, without Ahlfors regularity assumptions}

\author[Garnett]{John Garnett}

\address{John Garnett\\
Department of Mathematics, 6363 Mathematical Sciences Building \\University of California
at Los Angeles, 520 Portola Plaza, Los Angeles, California 90095-1555.
}
\email{jbg@math.ucla.edu}

\subjclass[2010]{31B15, 28A75, 28A78, 35J15, 35J08, 42B37}
\thanks{J.G. was supported by NSF Grant DMS 1217239.}

\begin{abstract}

Let $\Omega$ be a domain in $\R^{d+1}$, $d \geq 1$.  
In [HMM2] and [GMT] it was proved that if $\Omega$ satisfies a corkscrew condition and if $\partial \Omega$ is
$d$-Ahlfors 
regular, i.e.   Hausdorff measure $\HH^d(B(x,r) \cap \partial \Omega) \sim r^d$ for 
all $x \in \partial \Omega$ and $0 < r < \diam(\partial \Omega)$, then $\partial \Omega$ is uniformly  rectifiable if and only if 
\vv

(a)   a square function Carleson measure estimate holds for all bounded harmonic functions on $\Omega$ 

\vv

\noi or

\vv

(b) an $\ve$-approximation property holds for all such functions and all $0 < \ve < 1.$  

\vv
\noindent  Here we explore (a) and (b) when $\partial \Omega$  is not required to be Ahlfors regular. 
We first observe  that (a) and (b) hold for any domain $\Omega$ for which there exists a domain $\widetilde \Omega \subset \Omega$ such that  
$\partial \Omega \subset \partial \widetilde \Omega$ and $\partial \widetilde \Omega$ is uniformly rectifiable. 
 We then  assume  $\Omega$ satisfies a corkscrew condition and
$\partial \Omega$ satisfies a capacity density condition. 
Under these assumptions, we prove conversely that if {\rm (a)} or {\rm (b)} holds for $\Omega$ then such a domain  $\widetilde \Omega \supset \Omega$ exists and  
 give two further characterizations of domains where  (a) or (b) holds.  One is   
that  harmonic measure for $\Omega$  satisfies a Carleson packing condition with respect to diameters similar to the corona decomposition comparing harmonic measures to $\HH^d$  proved equivalent to  uniform rectifiability in  [GMT]. The second characterization is reminiscent of the 
Carleson measure description of $H^{\infty}$ interpolating sequences in the unit disc.

\end{abstract}

\maketitle

\tableofcontents

\vvv

\noi {\bf Acknowledgements.} I thank J. Azzam, J. M. Martell,  S. Mayboroda, M. Mourgoglou and 
X. Tolsa for helpful conversations.



\section{Introduction}

Let $\Omega \subset \R^{d+1}$ be open.   
We say bounded harmonic functions on $\Omega$ satisfy a {\it {Carleson measure estimate}} if there is a constant 
$C>0$ such that 
\begin{equation}\label{(1.1)}
\frac{1}{r^d} \int_{B(x,r) \cap \Omega} |\nabla u(x)|^2\,\dist(x,\partial\Omega)\,dx\leq C\|u\|^2_{L^\infty(\Omega)}.
\end{equation}

\vv
\noi whenever $x\in \partial \Omega, 0 < r < \diam(\Omega)$, and $u$ is a bounded harmonic function on $\Omega.$ 
It is a famous result of C. Fefferman [FS] that 
(1.1)
holds for the half spaces $\R^{d+1}_{+}.$

\vv

If $u$ is a bounded harmonic function on $\Omega$ and 
if $0 < \ve  < 1$, we say that
$u$ is 
  $\ve$-\textit{approximable} 
if there exists $g \in W^{1,1}_{\rm {loc}}(\Omega)$
and $C>0$ such that 

\begin{equation}\label{(1.2)}
\|u - g\|_{L^{\infty}(\Omega)} < \varepsilon
\end{equation}        
and for all $x \in \partial\Omega$ and all $r > 0$

\begin{equation}\label{(1.3)}
\frac{1}{r^d} \int_{B(x,r) \cap \Omega} |\nabla g(y)| \,dy \leq C.
\end{equation}

\noi It is clear by  normal families that  (1.2) and (1.3) then hold for every bounded harmonic function on $\Omega$ with constant $C = C_{\ve}$ depending only on  $\ve$ and $\Omega$. It is also clear that after local mollifications (1.2) and (1.3) will hold with $g \in C^{\infty}(\Omega)$; see [Ga], page 347.

\vv 
The notion of $\ve$-approximation was introduced by Varopoulos in [Va1] and [Va2] in works 
on 
corona problems   and $H^1-BMO$ duality.   Chapter VIII of [Ga] gave  a proof for all 
$\ve >0$ on the upper half plane and Dahlberg [Dal] extended the proof to Lipschitz domains,  using his work connecting  
square functions to maximal functions.  Later  Kenig, Koch, Pipher and Toro [KKoPT] applied 
$\ve$-approximation to more general elliptic boundary value problems and proved to that on any Lipschitz domain 
 elliptic harmonic measure is $A_{\infty}$ equivalent to boundary surface measure.  Further connections   
 between $\ve$-approximation,  Carleson measure  estimates,
square functions, maximal  functions, and $A_{\infty}$ conditions for elliptic measures have been  obtained on Lipschitz domains 
by several authors, including [DKP], [JK], [HKMP], [KKiPT]  
 and [Pi],  
and then on domains with Ahlfors regular boundaries by [AGMT], [HLMN], [HM1], [HM2] and [HMM2],  and most recently by [Az], [AHMMT], [BH], [H], [HMM3], [HMMTZ1] and [HMMTZ2]. 

\vv
The papers  [HMM2] and [GMT]  connect $\ve$-approximation and Carleson measures to rectifiability in domains with Ahlfors regular boundaries.  To explain their work we give three definitions:
The open set  $\Omega \subset \R^{n}$ satisfies a {\emph {corkscrew condition}} if 
there exists a constant $\alpha$ such that whenever $x \in \partial \Omega$ and $0 < r < {\rm {diam}}(\Omega)$ there exists ball $B(p,\alpha r)$ so that

\begin{equation}\label{(1.4)}
B(p,\alpha r) \subset \Omega \cap B(x,r).
\end{equation} 

\noi  If $\Omega$ is a connected open set with the corkscrew condition we say $\Omega$ is a {\emph {corkscrew domain}}. 
For $n >  d \geq 0$, a set $E\subset\R^{n}$ is called $d$-{\textit {Ahlfors regular}} (or simply Ahlfors regular if $d$ is clear from the context) if there exists a
constant $c>0$ such that for all $x \in E$ and $0 < r < \diam(E),$

\begin{equation} \label{(1.5)}
c^{-1}r^d\leq \HH^d(B(x,r)\cap E)\leq c\,r^d
\end{equation}
where $\HH^d$ denotes $d$-dimensional Hausdorff measure. 
When $1 \leq d < n$ is an integer, the set $E\subset\R^{n}$ is  {\textit {uniformly}}  $d$-{\textit {rectifiable}} if it is 
$d$-Ahlfors regular and 
there exist constants $c$ and  $M >0$ such that for all $x \in E$ and all $0<r\leq \diam(E)$ 
there is a Lipschitz mapping $g$ from the ball $B(0,r) \subset \R^{d}$ to $\R^n$ such that
$\text{Lip}(g) \leq M$ and

\begin{equation}\label{(1.6)}
\HH^d (E\cap B(x,r)\cap g(B_{d}(0,r)))\geq c r^d.
\end{equation}

\vv
\noindent Uniform rectifiability is a quantitative version of rectifiability.  It was introduced in the pioneering works [DS1] and [DS2] and David and Semmes who proved  $(n-1)$-uniform rectifiablity is     
a  geometric condition under which all singular integrals with sufficiently smooth odd kernels are 
bounded in $L^2(\partial \Omega)$. Later [MMV] and [NToV] proved conversely that the	 $L^2$ boundedness of  the Cauchy integral or the Riesz transforms on an Ahlfors regular boundary $\partial \Omega$ implies $\partial \Omega$ is $(n-1)$ uniformly rectifiable.  

\vv

The papers   [HMM2] and [GMT] prove that 
if  $\Omega\subset\R^{d+1}$,  $d\geq1$, is a corkscrew domain and $\partial \Omega$ is $d$-Ahlfors regular, then the following are equivalent:
\begin{itemize}
\item[(a)] All bounded harmonic functions on $\Omega$ satisfy the Carleson measure estimate (1.1).
\vv
\item[(b)] Every bounded harmonic function on $\Omega$ is $\varepsilon$-approximable for all
$0 < \varepsilon < 1.$ 
\vv
\item[(c)] $\partial\Omega$ is uniformly $d$-rectifiable.
\end{itemize}

\vv

\noi In fact,  [HMM2] proved {\rm (c)} implies {\rm (a)} and {\rm (b)} and [GMT] proved the converse statements.
Here our goal is to understand  the conditions  {\rm (a)} and {\rm  (b)}   when  $\partial \Omega$ is not
 necessarily  Ahlfors regular.  To state our results we need two more definitions.
We will usually assume $\Omega$  satisfies  a \textit {capacity density condition}: there is $\beta > 0$ such that for all $x \in \partial \Omega$ and $r \leq \diam(\Omega),$

\begin{equation}\label{(1.7)}
{\rm{Cap}}(B(x,r) \setminus  \Omega)\geq \left\{\begin{array}{ll}
\beta r &\quad \mbox{if $d+1 =2$,}\\
\\
\beta r^{d-1} &\quad \mbox{if $d+1 \geq 3$.}
\end{array}
\right.
\end{equation}

\noi where {\rm {Cap}} is Newtonian capacity when $d+1 \geq 3$ and logarithmic capacity when $d+1 =2.$
 If $\Omega$ satisfies (1.7)  every point of $\partial \Omega$ is regular for the Dirichlet problem, so that  for each $p \in \Omega$ there exists an unique Borel probability $\omega_p = \omega(p,.,\Omega)$ on $\partial \Omega$ such that 

\begin{equation}\label{(1.8)}
u(p) = \int_{\partial \Omega} u(x) d\omega(p,x,\Omega)
\end{equation}

\noindent  if $u$ is continuous $\overline \Omega$ and harmonic on $\Omega$.   Moreover, if $u(x)$ is continuous  on $\partial \Omega$, (1.8) defines a function harmonic on $\Omega$
which  continuously extends $u$ from $\partial \Omega$ to $\overline \Omega.$ Since $\Omega$ is connected it follows from Harnack's inequality that for all $p,q \in \Omega$ there is a constant $C_{p,q} = C_{p,q}(\Omega)$ such that $\omega_p \leq 
C_{p,q} \omega_q.$ 
The measure $\omega_p$ is called {\emph {harmonic measure for}} $p$.

\vv

\begin{theorem}\label{thm1}
Let $\Omega\subset\R^{d+1}$, $d\geq 1$, be a domain.  

\vv
\noi {\bf {A.}} ~If there exists a domain $\widetilde \Omega$ such that

\begin{equation}\label{(1.9)}
\widetilde \Omega \subset \Omega ~~~ and  ~~~ \partial \Omega \subset \partial \widetilde\Omega,
\end{equation}

\noi  and $\partial \tilde \Omega$ is uniformly rectifiable, then {\rm (a)} and {\rm (b)} hold for $\Omega.$ 

\vv

\noi {\bf {B.}} ~Conversely,  if $\Omega$ satisfies {\rm (1,4)}, {\rm (1.7)} and either   {\rm (a)} or 
{\rm (b)},  then there exists a domain $\tilde\Omega$ with $\partial \tilde\Omega$ is uniformly rectifiable  such that {\rm (1.9)} holds. 

\end{theorem}
\vv

The proof of Part A of Theorem 1.1  is an easy application via Whitney cubes of the theorem  of [HMM2]. The proof of the converse Part B involves a variation on a corona decomposition in [GMT] and  occupies most of this paper.

\vv

\begin{theorem}\label{thm2} 
If $\Omega$ is a  domain satisfying  {\rm (1.4)} and  {\rm (1.7)}, there is $\ve_0 >0$, depending only on the constants in {\rm (1.4)} and {\rm (1.7)}, such that:    

\vv
\noi {\bf {A.}} If {\rm(a)} or {\rm(b)} holds for $\Omega$ then for every 
$0 < \ve < \ve_0$ there is $C(\ve)$ such that if  $p_j \in \Omega \cap B(x,R),$  $x \in \partial \Omega,$ and  $E_j \subset \partial \Omega$ satisfy

\begin{equation}\label{1.10}
\omega(p_j,E_j,\Omega) \geq 1 -\ve,
\end{equation}

\noi and 

\begin{equation}\label{1.11}
 E_j \cap E_k = \emptyset ~~{\rm {if}}~~ k \neq j,
\end{equation}

\noi then 

\begin{equation}\label{1.12}
\sum (\dist(p_j,\partial \Omega)^d \leq C(\ve) R^d.
\end{equation}

\vv
\noi {\bf {B.}} Conversely,  if  for some $0 < \ve < \ve_0$, {\rm (1.10)} 
 and {\rm (1.11)} imply {\rm (1.12)} whenever such $\{p_j\}$ and $\{E_j\}$ exist, then {\rm (a)} and {\rm (b)} hold for $\Omega.$ 

\end{theorem}

\vv

The proof of Part A of Theorem 1.2  is  in Section 4.  It uses a construction  from the beginning of [GMT] and some elementary properties of harmonic measure.  The proof of the converse Part B is deeper.  It runs parallel to the proof of Part B  of Theorem 1.1.

\vv

To illustrate Theorem 1.1 and Theorem 1.2  we consider  Cantor sets.  Let $0 < \lambda < 1/2$ and in  $\R^2$
set $K_{\lambda} = \bigcap_{n \geq 0} K_{\lambda,n} $ where $K_{\lambda,0} = [0,1] \times ]0,1],$ $K_{\lambda,n+1} \subset K_{\lambda,n},$ and $K_{\lambda,n+1}$ is the union of $4^{n+1}$ pairwise disjoint closed squares of side $\lambda^{n+1}$, each  containing one corner of a component square of $K_{\lambda,n}.$    
Then (1.4) and (1.7) hold for 
$\Omega_{\lambda} = \R^2 \setminus  K_{\lambda}.$   Theorem 1.1 
implies  ${\rm (a)}$ or ${\rm (b)}$ holds for $\Omega_{\lambda}$ if and only if $\lambda < 1/4,$ but this can be proved without the harder proof of Theorem 1.1.   If $\lambda \geq 1/4$, $\HH^1$ and harmonic measure for $\C \setminus K_{\lambda}$ are mutually singular ([Ca2], [GM]) and then the easier half  of the proof of Theorem 1.2 in Section 4 shows ${\rm (a)}$ and ${\rm (b)}$ fail.  The case  
$\lambda < 1/4$ is  easier yet because then if $u$ is harmonic on $\Omega_{\lambda}$

$$\int_{B(x,R) \setminus K_{\lambda}} |\nabla u| dy \leq ||u||_{L^{\infty}(\Omega)} \int_{B(x,R) \setminus K_{\lambda}}
 \frac{dy}{\dist(y,K_{\lambda})}\leq CR ||u||_{L^{\infty}(\Omega)}.$$

\vv

\noi When $\lambda < 1/4$ the domain $\widetilde\Omega_{\lambda}$ can be obtained by removing from
$\Omega_{\lambda}$ a continuum of diameter $c\lambda^n$  near the center of each
 $K_{\lambda,n}$, and the converse proof of Theorem 1.1 amounts to constructing similar continuua  in the general case.  There, it is helpful to recall that for  $\lambda < 1/4,$ the harmonic measures for $\Omega_{\lambda}$ and $\widetilde \Omega_{\lambda}$ are
mutually singular.  

\vv  

The Part B converses of Theorem 1.1 and Theorem 1.2 are both corollaries of
Theorem 1.4, which asserts that under   (1.4) and (1.7)  {\rm (a)} and {\rm (b)} are both  equivalent to the existence of a particular corona decomposition on 
$\partial \Omega$ made by comparing harmonic measures to diameters. This corona decomposition is   
similar to the  decomposition in [GMT],  which in the Ahlfors regular case is proved in Proposition 3.1 and Proposition 5.1 of [GMT] to be equivalent to the uniform rectifiability of $\partial \Omega$,  and thus also equivalent to (a) or (b).    
The corona decomposition in [GMT] uses a family of subsets of $\partial \Omega$, often called Christ-David cubes,  
that has been defined only  when $\partial \Omega$ is Ahlfors regular and to make the decomposition more  generally one must first 
define  
  a new family of ``cubes"   
in  $\partial \Omega$. In Proposition 1.3 these cubes are built by repeating the original construction of David [Da] assuming $\Omega$ 
satisfies the condition of Theorem 1.2 but not assuming $\partial \Omega$ is Ahlfors regular.  
The main
difference  between the  corona decomposition in Theorem 1.4  and the one in [GMT]  is this definition of cubes. and

\vv
To state Theorem 1.4 we first explain its setting, to be  discussed  more  in Section 6.

\vv

\begin{proposition}\label{prop1} Assume $\Omega$ is a bounded corkscrew domain satisfying 
{\rm (1.7)} and the conclusion of Theorem {\rm {1.2}} that {\rm (1.10)}  and {\rm (1.11)} imply
{\rm (1.12)}. Then there exists a positive integer $N$ and a family  
$$\SSS = \bigcup_{j \geq 0} {\SSS}_j$$
of Borel subsets of $\partial 
\Omega$ which has properties {\rm (1.13), (1.14), (1.15), (1.16),} {\rm (1.17)} and the ``small boundary property'' {\rm (1.18)}.

\begin{equation}\label{(1.13)}
\diam{S} \sim 2^{-Nj}  ~~~{\rm {if}}~~~ S \in \SSS_j;
\end{equation}

\vv

\begin{equation}\label{(1.14)}
\partial \Omega = \bigcup_{{\SSS}_j}S, ~~~{\rm {for ~all}}~~~ j;
\end{equation}

\vv

\begin{equation}\label{(1.15)}
S \cap  S'  = \emptyset ~~~ {\rm {if}} ~~~ S,~ S'\in {\SSS}_j ~~~{\rm {and}}~~~ S' \neq S;
\end{equation}

\vv

\begin{equation}\label{(1.16)}
 {\rm {if}}~~ j < k, ~~~ S_j \in \SSS_j ~~{\rm {and}} ~S_k \in \SSS_k, ~~~ {\rm {then}} ~~~  S_k \subset S_j ~~~{\rm {or}}~~~ 
S_k \cap S_j = \emptyset.
\end{equation}

\vv

\noi There exists constant $c_0$ such that for all $S \in \SSS$ there exists $x_S \in S$ with  

\begin{equation}\label{(1.17)}
B(x_S,c_0\ell(S)) \cap \partial \Omega \subset S. 
\end{equation}

\vv
\noi   For $0 < \tau < 1$ and $S_j \in \SSS_j$ define

\vv

$$\Delta_{\tau}(S_j) =  \Bigl\{y \in S_j: \dist(y, \partial \Omega \setminus S_j) < \tau 2^{-Nj}\Bigr\} \cup 
\Bigl\{y \in \partial \Omega \setminus S_j: \dist (y,S_j) < \tau 2^{-Nj}\Bigr\},$$ 

\vv

\noi let 

$$\GZ(\tau 2^{-Nj}) = \Bigl\{K =\bigcap_{1 \leq i \leq d+1} \{k_i \tau 2^{-Nj} \leq x_i \leq (k_i + 1) \tau 2^{-Nj}\}, k_i \in \Z \Bigr\}$$

\noi denote the set of closed dyadic cubes in $\R^{d+1}$ of side $2^{-Nj},$ scaled down by $\tau,$ 
 and define 

$$N_{\tau}(S_j) = \#\bigl\{K \in \GZ(\tau 2^{-Nj}): K \cap \Delta_{\tau}(Q) \neq \emptyset\bigr\}.$$

\noi Then there exists a constant $C = C_{sb}$ so that  

\noi 
\begin{equation}\label{(1.18)}
N_{\tau}(S_j) \leq C {\tau}^{(1/C) -d}
\end{equation}

\noi for all $\tau$ and all $S \in \SSS.$ 

\end{proposition}

\vv

Assuming Proposition 1.3 we make the following construction:   By (1.17), (1.13), and (1.4), to each $S \in \SSS$ there corresponds  a ``corkscrew ball" $B(p, \alpha c_0 \ell(S)) \subset \Omega$ with $\dist(p,S) \leq c_0\ell(S).$
Moreover, by (1.7) and  Lemma 3.1 and Lemma 3.2  from Section 3 below, there exist for any
$0 < \ve < 1/2$  constants

\begin{equation}\label{(1.19)}
 2^{-N-1}c_0 < c_3 < 4c_3  < c_0
\end{equation}

\noi  depending  on $\ve$,  the constants in (1.4)  and (1.7), and the constants $c_1, c_2$ from Section 3, but not on $N,$  
such that for every $S \in \SSS$ there exists  a ball 
$B_S = B(p_S,c_3\ell(S))$ satisfying: 

\begin{equation}\label{1.20)}
B_S = B(p_S,c_3\ell(S)) \subset 4B_S = B(p_S,4c_3\ell(S)) \subset \Omega \cap B(x_S,\frac{c_0}{2}\ell(S));
\end{equation}

\noindent and

\begin{equation}\label{(1.21)}
\inf_{p \in 2B_S}\Bigl\{\omega\bigl(p,S \cap B(x_S,c_0\ell(S)), \Omega \cap B(x_S,c_0\ell(S))\bigr)\Bigr\} \geq 1-\ve.
\end{equation}

\vv
\noi We can also take $N$ so large that if $S \cap S' = \emptyset$

\begin{equation}\label{(1.22)}
B_S \cap B_{S'} = \emptyset 
\end{equation}

\noi and if $\ell(S') > \ell(S)$

\begin{equation}\label{(1.23)}
2B_{S'} \cap B(x_S, c_0\ell(S)) =\emptyset.
\end{equation}

\vv

\noi Indeed, when $N$ is large  (1.22) follows from  (1.13) and 
(1.16) if $\ell(S) \neq \ell(S')$ , and if $\ell(S) = \ell(S')$ 
(1.15) and (3.4) imply 
(1.22) since $\ve < 1/2.$  If $\ell(S') > \ell(S)$ then (1.23)  holds by  (1.19).

\vv

For $S\in \SSS$ and $\lambda >1$ define
$\lambda S = \{x: \dist(x,S) \leq (\lambda -1)\ell(S)\}.$ 
Let $0 < \delta \lesssim 1$ and $A  \gtrsim 1$ be fixed constants.  For  $S_0 \in \SSS$ and $S \in \SSS$ with $S \subset S_0$,   we say $S \in \HD(S_0)$
(for ``high density") if  $S$ is a maximal cube for which

\begin{equation}\label{(1.24)}
\inf_{p \in B_{S_0}}\omega(p,2S) \geq A \Bigl(\frac{\ell(S)}{\ell(S_0)}\Bigr)^d,  
\end{equation}

\noi and we say $S \in \LD(S_0)$ (for ``low density) if $S$ is maximal for

\begin{equation}\label{(1.25)}
\sup_{p \in B_{S_0}}\omega(p,S) \leq \delta \Bigl(\frac{\ell(S)}{\ell(S_0)}\Bigr)^d.  
\end{equation}

\vv
\noindent
By (3.2) and Harnack's inequality   

\begin{equation}\label{(1.26)}
\sup_{p \in B_{S_0}}\omega(p,S) \leq c_5\inf_{q \in B_{S_0}}\omega(q,2S)
\end{equation}

\noi for some constant $c_5$ and we can assume $A > c_5\delta$ so that $\HD(S_0) \cap \LD(S_0) = \emptyset.$

\vv 
For  each $S_0 \in \SSS$ let 

\begin{equation}\label{(1.27)}
G_1(S_0) =\bigl\{S \in \LD(S_0) \cup \HD(S_0): S {\rm ~{ is ~ maximal}}\bigr\}.
\end {equation}

\vv

\noindent We call $G_1(S_0)$ the {\it {first generation of descendants}} of $S_0$,  and we define later generations  inductively:

\begin{equation}\label{(1.28)}
G_k(S_0) = \bigcup_{S \in G_{k-1}(S_0)} G_1(S).
\end {equation}

\vv

Proposition 1.3 will be proved in Section 5 after Part A of Theorem 1.2 has  been  proved in Section 4.   Therefore it is not inconsistent assume the conclusions of Proposition 1.3 after assuming (a) or (b) in Part A  of Theorem 1.4.

\vv     

\begin{theorem}\label{thm3} If $\Omega$ is a  domain satisfying  {\rm (1.4)} and  {\rm (1.7)} there is $\ve_1 >0$, depending only on the constants in {\rm (1.4)} and {\rm (1.7)}, such that: 

\vv
\noi {\bf {A.}} ~ Assume     
 {\rm (a)} or {\rm (b)} holds for $\Omega$ and let $\SSS$ be a family of subsets of $\partial \Omega$ satisfying Proposition {\rm (1.3)}.  Then there exists  $A_0$ such that whenever $0 < \ve < \ve_1$, $0 < \delta < \frac{\ve}{3}$, and 
 $A > {\rm {Max}}(A_0,c_5\delta)$, 
 there exists a constant $C = C(\ve,\delta,d,A)$ such that  for any $S_0 \in \SSS$ 

\begin{equation}\label{(1.29)}
\sum_{k=1}^{\infty} \sum_{G_k(S_0)} \ell(S)^d  \leq C\ell(S_0)^d.
\end {equation}

\vv

\noi {\bf {B.}}~ Conversely,  assume there exists a family $\SSS$ of subsets of $\partial \Omega$  satisfying Proposition {\rm (1.3)} and {\rm {(1.20), (1.21)}} and {\rm (1.22)}, assume {\rm {(1.24), (1.25), (1.26)}}  and 
{\rm {(1.27)}} hold for  some  $\ve, \delta$ and $A$ with $0 < \ve < \ve_1$, 
$0 < \delta < \frac{\ve}{3}$, and $A > c_5\delta,$
and further  assume

\vv

{\rm {(i)}} $\SSS$ satisfies {\rm (1.29)}, and

\vv

{\rm {(ii)}} there exists $C > 0$ such that if $B$ is a ball, 
$\{S_j\} 
\subset \SSS,$  
$\bigcup S_j \subset B$   
and
$S_j \cap S_k  = \emptyset$ for  $j \neq k$, then $\sum \ell(S_j)^d \leq C \diam (B)^d.$

\vv

\noi Then {\rm (a)} and {\rm (b)} hold for $\Omega.$ 
\end{theorem}

\vv

Part A  of Theorem 1.1 is proved in Section 2, without assuming (1.4) or (1.7).  In Section 3 we give three lemmas from [An] and [GMT] which lead to the proof in Section 4 of Part A  of Theorem 1.2.     

The proofs of  Theorem 1.1 Part B,  Theorem 1.2 Part B, and Theorem 1.4 are convoluted. In Section 5 the conclusion of Theorem 1.2 is used to define the cube family $\SSS$ and prove Proposition 1.3.    In  Section 6    
properties of $\SSS$ and 
 the construction from [GMT] yield a proof  Part A  of Theorem 1.4 (and  thereby extend Proposition 3.1 of [GMT] to domains satisfying (1.4) and (1.7)).    Then in Section 7  Part A of Theorem 1.4 is used to construct a subdomain $\tilde \Omega \subset \Omega$
such that $\partial \Omega \subset \partial \tilde \Omega$ and  $\partial \tilde \Omega$ is Ahlfors regular and in Section 8  the crucial   
generation sum (1.29) for $\Omega$ is used to control  the corresponding  sum for  $\tilde\Omega$. With Lemma 6.2  and  Proposition 5.1 of [GMT], that implies  $\partial \tilde \Omega$ is uniformly rectifiable, and therefore proves  Part B of 
Theorem 1.1.  Finally,  the proof of Theorem 1.4 Part B    follows from Theorem 1.1 Part A and the proof of  Theorem 1.1 Part B, and  the  proof of Theorem 1.2 Part B is a word-for-word repeat of the same argument.   An outline of the logic is:


\begin{align}
\tilde\Omega ~~ &{\rm {exists}} \nonumber \\
                   &\Downarrow \nonumber \\
{\rm (a)} ~~ &{\rm {and}} ~~ {\rm (b)} \nonumber \\
             &\Downarrow \nonumber \\
{\rm (a)} ~~ &{\rm {or}} ~~ {\rm (b)} \nonumber  \\
                 &\Downarrow \nonumber \\
{\rm {Theorem  ~ 1.2}} ~~~ &{\rm {Part ~ A}}\nonumber \\
                     &\Downarrow \nonumber \\
{\rm {Proposition ~~1.3}} ~~&{\rm {and}} ~~{\rm {Theorem 1.4 ~~ Part A}} \nonumber \\
                      &\Downarrow \nonumber \\
\tilde\Omega ~~ &{\rm {exists}}.  
 \nonumber
\end{align}

\vv

 A reading of the proofs will show that $\ve-$ approximation of all harmonic functions with
$\sup_{\Omega} |u|  \leq 1$ for some  fixed small $\ve$ is equivalent to the other conclusions of all three theorems.

\vv
The argument in this paper entails many constants.  Constants $C$ or $C_j$ are large and may vary from use to use, 
but the constants $c_0, c_1, ......$ are small and sometimes interdependent. They are written so that $c_j$ can depend on $c_k$ only if $k < j.$

\vv

\section{Proof of  Theorem 1.1 Part A.}

We recall the Whitney decomposition of $\Omega$ into  cubes $\Omega = \bigcup_{\cW}Q$.  Each $Q \in \cW = \cW(\Omega)$ is a closed dyadic cube,

\begin{equation}\label{2.1}
Q = \bigcap_{1 \leq j \leq d+1} \bigl\{k_j2^{-n} \leq x_j \leq (k_j+1)2^{-n}
\bigr\}
\end{equation}

\noi with  $n$ and $k_j$ integers.
 If  $Q_1, Q_2 \in \cW$ then  

\begin{equation}\label{2.2}
Q_1 \subset  Q_2, ~~~ Q_2 \subset Q_1, ~~~ {\rm {or}} ~~~ Q_1^o \cap Q_2^o = \emptyset,  
\end{equation}

\vv
\noi where $Q^o$ denotes the interior of $Q$.  There are constants $1 < c_6 < c_7 < 3$ such that for all $Q \in \cW$
\begin{equation}\label{2.3}
c_6 Q \cap \partial \Omega = \emptyset  ~~~{\rm {but}}~~~ c_7 Q \cap \partial \Omega \neq \emptyset,
\end{equation}
\vv
\noi  where $\ell(Q)$ is the sidelength of $Q$ and $cQ$ is the concentric closed cube having sidelength $c\ell(Q).$ 

\vv
Assume $\Omega$ and $\widetilde \Omega$ satisfy  condition (1.9) from Theorem 1.1, let $u$ be an harmonic function on $\Omega$ with
 $\sup_{\Omega} |u(y)| \leq 1$, and let $Q \in \cW(\Omega).$  We fix a constant $1 < c_8 < c_6$ and consider two cases.

\vv
\noi {\bf {Case I:}} 
$c_{8} Q \cap \partial \widetilde \Omega = \emptyset.$

\vv
 In this case there is $C_1 = C_1(d, c_7, c_8)$ such that 
$\dist(y,\partial \Omega) \leq C_1 \dist(y, \partial \widetilde \Omega)$
for all $y \in Q$, so that 
\vv

\begin{equation}\label{case1}
\int_Q |\nabla u(y)|^2 \dist(y,\partial \Omega) dy \leq 
C_1\int_Q |\nabla u(y)|^2 \dist(y,\partial \widetilde \Omega) dy.
\end{equation}

\vv

\noi{\bf {Case II:}} 
$c_{8} Q \cap  \partial \widetilde \Omega \neq \emptyset.$

\vv

In this case  Harnack's inequality  gives  
$\sup_Q|\nabla u(y)| \leq \frac{C_2}{\ell(Q)},$
for $C_2 = C_2(d,c_7)$,  so that

\begin{equation}\label{case2}
\int_Q |\nabla u(y)|^2 \dist(y,\partial \Omega) dy
\leq C_2^2 (1 + c_8)^\frac{d+1}{2} {\ell(Q)}^d = C_3 {\ell(Q)}^d.
\end{equation}

\vv

\noi Now consider a ball $B = B(x,r)$, with   $x \in \partial \Omega,  r < \diam \Omega$,  and let 
 $$\cW_B = \{Q \in \cW(\Omega): Q \cap B \neq \emptyset\},$$

\vv
\noi and for $J = I$ or $II$ 
let $\cW_{B,J}$ be the set of Case J cubes in $\cW_B.$
Also note that by (2.3) 

\begin{equation}\label{bigB}
\bigcup_{\cW_B} c_6Q \subset B(x, C_4r)
\end{equation}

\noi for a constant  $C_4$ depending only $c_6$ and $c_7.$ 
Since $\HH^{d+1}(\partial \widetilde \Omega \setminus \partial \Omega) =0$ (because $\partial \widetilde \Omega$ 
is Ahlfors regular) we have

$$\int_B |\nabla u(y)|^2 \dist(y,\partial \Omega) dy \leq 
\sum_{\cW_B} \int_Q |\nabla u(y)|^2 \dist(y,\partial \Omega) dy =
\sum_{\cW_{B,I}} + \sum_{\cW_{B,II}}.$$

\vv
\noindent To estimate $\sum_{\cW_{B,I}}$ we use  (2.4), (2.6), the uniform rectifiability of $\partial \widetilde \Omega,$ and the theorem of [HMM2] to get  

\begin{equation}\label{case11}
\sum_{\cW_{B,I}} \leq 
C_1 \int_{B(x, C_4r)} |\nabla u(y)|^2 \dist(y, \partial \widetilde \Omega) dy \leq C(C_4r)^d.
\end{equation} 

\noindent For estimating  $\sum_{\cW_{B,II}}$  the only available inequality is  
  
$$\sum_{\cW_{B,II}} \leq C_3 \sum_{\cW_{B,II}}  {\ell(Q)}^d$$

\noi from (2.5).  But in Case II

\begin{equation}\label{(2.8)}
{\ell(Q)}^d \leq C_5 \HH^d(c_6Q \cap \partial \widetilde \Omega) 
\end{equation}

\noi because $\partial \widetilde \Omega$ is Ahlfors regular
\noi and by  (2.2) and (2.3)
no point lies in more than $N = N(c_6, c_7, d)$ cubes $c_6Q, ~ Q \in \cW.$
Therefore (2.5), (2.6), and the Ahlfors  regularity of $\partial \widetilde \Omega$ imply

\begin{equation}\label{case21}
\sum_{\cW_{B,II}} \leq   
C_5\sum_{\cW_{B,II}} {\ell(Q)}^d \leq C_5N \HH^d(B(x,C_4r)) \leq C_5N (C_4r)^d. 
\end{equation}

\noi Thus by (2.7), (2.5) and (2.9), (a) holds for all bounded harmonic $u$.

\vv

To prove (b) let $u$ be an harmonic function on $\Omega$, let $\ve > 0$ and consider the Case I and Case II cubes in $\cW(\Omega)$.   Write 

$$U = \bigcup_{\rm {Case ~ II}} Q,~~ V = \bigcup_{\rm {Case ~ I}} Q,$$

\noi and 

$$\Gamma = \Omega \cap \partial V  = \Omega \cap \partial U.$$

\vv

\noi Let $g \in W^{1,1}(\tilde \Omega)$ satisfy (1.2) and (1.3) for $u$ on $\tilde \Omega$ 
and define
$G = g \chi_U + u \chi_{V \cup \Gamma}.$ Then $||u -G||_{L^{\infty}(\Omega)} < \ve,$ and on $\Omega$ the distribution 

$$\nabla G = \chi_U \nabla g + \chi_V \nabla u + \nu$$ 

\noi where $\nu$ is an $\R^{d+1}$-valued measure that accounts for the jump between  $g$ and $u$ across
$\Gamma$  and has total variation 
$|\nu| \leq \ve \chi_{\Gamma} \HH^d.$
 Let $x \in \partial \Omega$ and $r >0.$ Then by the proof of (a)

$$\int_{B(x,r) \cap (U \cup V)}|\nabla G| dy \leq C r^d,$$

\noi and because $\partial \tilde \Omega$ is Ahlfors regular,  (2.8) implies 

$$|\nu|(B(x,r) \cap \Omega) \leq C\ve r^d.$$

\noi Hence (1.3) holds for the vector measure $\nabla G.$ 

\vv

To replace $G$ by a $W^{1,1}_{\rm {loc}}$ function let $\eta > 0$ be small, write
$$\psi_{\eta}(y) = \eta^{-(d+1)} \psi\bigl(\frac{y}{\eta}\bigr)$$ where 
$\psi \in C^{\infty}(R^{d+1})$ is
a non-negative radial function, compactly supported in $B(0,1),$ 
and satisfying $\int_{R^{d+1}}  \psi dy =1,$ and for $y \in \Omega$ define by convolution

$$\tilde G(y) = G * \psi_{\eta \dist(y,\partial \Omega)}(y).$$

\noi Then $\tilde G \in W^{1,1}_{\rm {loc}}(\Omega)$ and (1.2) and (1.3) hold for $\tilde G$
 and $u$.

\vv
\section{Three Lemmas.}

 Recall we assume
(1.7)  so that the harmonic measure  $\omega(p,E) = \omega(p,E, \Omega)$ exists for $p \in \Omega$ and Borel $E \subset \partial \Omega.$   The first lemma is Lemma 3 from [An].

\begin{lemma}\label{lem2}  $\Omega$ satisfies {\rm (1.7)} with constant $\beta$ if and only if there exists 
$\eta = \eta(\beta) <1$ such that for all $x \in \partial \Omega$ and all $r >0$  

\begin{equation}\label{(3.1)}
\sup_{B(x,r) \cap \Omega}\omega(p,\partial B(x,2r) \setminus \Omega, \Omega \cap B(x,2r)) \leq \eta.
\end{equation}

\end{lemma}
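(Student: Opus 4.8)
The plan is to prove the two implications separately, exploiting the fact that both conditions are really statements about how much mass of harmonic measure (from a point near $x$) can "escape" through $\partial B(x,2r)$ versus how much is absorbed by the complement of $\Omega$ inside $B(x,2r)$.

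First I would prove that the maximum-principle bound \eqref{(3.1)} implies the capacity density condition \eqref{(1.7)}. Fix $x \in \partial\Omega$ and $r>0$, and work in the ball $B = B(x,2r)$. The key observation is that the harmonic measure $\omega(p,\partial B(x,2r)\setminus\Omega, \Omega\cap B(x,2r))$ is, up to normalization, comparable to the equilibrium-type potential of $B(x,2r)\setminus\Omega$; more precisely, one compares it with the harmonic function in $B\setminus(B(x,r)\setminus\Omega)$ — or in $B$ minus a suitable condenser — whose boundary values are $1$ on $B(x,r)\setminus\Omega$ and $0$ on $\partial B$. Standard capacitary estimates (the relation between the capacity of a condenser $(K, \partial B)$ and the value at the center of the equilibrium potential) then translate the lower bound $1-\eta$ on $\omega$ at points of $B(x,r)\cap\Omega$ into a lower bound $\mathrm{Cap}(B(x,r)\setminus\Omega) \geq \beta r^{d-1}$ (resp. $\beta r$ when $d+1=2$, using logarithmic capacity and the appropriate conformal/logarithmic estimates). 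Here I would take $p$ to be, say, a point where one can pass to a concrete comparison — but in fact one needs the bound \emph{for all} $p\in B(x,r)\cap\Omega$, which is exactly why the "$\sup$" form in \eqref{(3.1)} is what's equivalent to the capacity bound. The constant $\beta$ depends only on $\eta$ and the dimension.

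Conversely, assuming \eqref{(1.7)} holds with constant $\beta$, I would show \eqref{(3.1)} holds for some $\eta=\eta(\beta)<1$. The idea is an iteration/telescoping argument using the strong Markov property of harmonic measure (Brownian motion). A single-scale estimate says: because $\mathrm{Cap}(B(x,r)\setminus\Omega) \gtrsim \beta r^{d-1}$, Brownian motion started anywhere in $B(x,r)\cap\Omega$ has probability at least some $\theta = \theta(\beta) > 0$ of hitting $\partial\Omega$ before exiting $B(x,2r)$ (this is the classical Wiener-type / capacity lower bound for hitting probabilities, valid uniformly over starting points in the smaller ball). Equivalently, the probability of reaching $\partial B(x,2r)$ without first hitting $\partial\Omega$ is at most $1-\theta$. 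Applying this at the single scale $r$ already gives \eqref{(3.1)} with $\eta = 1-\theta$; if a sharper or scale-robust statement were needed one could iterate over dyadic annuli, but for the stated lemma one scale suffices. I would cite [An, Lemma 3] for the precise form, or reproduce the short potential-theoretic argument.

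The main obstacle I anticipate is handling the two capacity regimes ($d+1\geq 3$ Newtonian vs. $d+1=2$ logarithmic) uniformly: the scaling $r^{d-1}$ versus $r$ reflects the different homogeneity of the two capacities, and in the logarithmic case the comparison between harmonic measure and capacity is less clean because logarithmic capacity is not monotone under all the operations one wants and the "equilibrium potential" picture is genuinely different (one works with $\log$ of ratios of radii). The cleanest route is probably to reduce the planar case to an estimate on $\omega$ of an annulus-type domain and invoke the known two-sided bounds for harmonic measure of a compact set of given logarithmic capacity inside a disc, rather than trying to force a single unified argument. Since this lemma is quoted from [An], in the write-up I would simply state it with attribution and, if desired, sketch only the $d+1\geq 3$ direction as representative.
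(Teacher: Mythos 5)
The paper offers no proof of this lemma at all: it is quoted directly as Lemma 3 of [An], so your concluding plan --- state it with attribution and at most sketch one direction --- is exactly what the paper does, and your sketch is the standard potential-theoretic argument. If you do write out details, note two points: the same-scale comparison in the direction (1.7) $\Rightarrow$ (3.1) is borderline as you state it (the equilibrium potential of $B(x,r)\setminus\Omega$ need not be smaller on $\partial B(x,2r)$ than at a point $p\in B(x,r)\cap\Omega$), so apply the capacity density condition at a slightly smaller radius, say to $B(x,r/4)\setminus\Omega$, to get a definite drop of the potential between $B(x,r)$ and $\partial B(x,2r)$; and the converse direction, which your sketch leaves vaguest (one must choose $p$ at distance comparable to $r$ from the complement, and handle the case where no such point exists), is the genuinely delicate half --- which is all the more reason to do what both you and the paper do and simply cite [An].
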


\vv

The second lemma is a well-known consequence  of Lemma 3.1 and induction.

\vv

\begin{lemma}\label{lem3}  Assume $\Omega$ satisfies {\rm (1.4)} and {\rm (1.7)} and 
let $0 < \ve < \frac{1}{2}.$
There are   constants $c_1$  and $c_2$  depending only on $\ve$ and the constants $\alpha$ and $\beta$ in
{\rm (1.4)} and {\rm (1.7)}, such that whenever $x \in \partial \Omega$ and $r < \diam \Omega,$ there exists a ball 
$B = B(p,c_1r)$ such that 

\begin{equation}\label{(3.1)}
4B = B(p,4c_1r) \subset \Omega \cap B(x,r),
\end{equation}

\vv

\begin{equation}\label{(3.2)}
\dist(p,\partial \Omega) < c_2r,
\end{equation}

\noindent and

\begin{equation}\label{(3.3)}
\inf_{q \in 2B}\omega(q,\partial \Omega \cap B(x,r),\Omega \cap B(x,r)) > 1-\ve.
\end{equation}

\end{lemma}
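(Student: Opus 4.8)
The plan is to run a standard iteration argument that uses the corkscrew condition to find an interior ball and then chains together many applications of Lemma 3.1 to push the exit probability of harmonic measure below $\ve$. First, by the corkscrew condition \rf{(1.4)} applied at $x$ and scale $r$, there is a ball $B(p,\alpha r) \subset \Omega \cap B(x,r)$; we set $c_1 = \alpha/4$ so that $4B = B(p,4c_1r) = B(p,\alpha r)$ satisfies \rf{(3.1)}, and then \rf{(3.2)} holds trivially with $c_2$ anything $\geq 1$ since $\dist(p,\partial\Omega)$ could be as large as $r$ — in fact we can just take $c_2 = 1$ (the bound $\dist(p,\partial\Omega) < c_2 r$ is automatic because $p \in \Omega \cap B(x,r)$ and $x \in \partial\Omega$). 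So the only real content is \rf{(3.3)}.

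For \rf{(3.3)}, fix $q \in 2B = B(p,2c_1r) = B(p,\tfrac{\alpha r}{2})$. Write $u(q) = \omega(q, \partial\Omega \cap B(x,r), \Omega \cap B(x,r))$ and note $1 - u(q) = \omega(q, \partial B(x,r)\cap\Omega, \Omega\cap B(x,r))$ is the probability that Brownian motion started at $q$ exits the region $\Omega \cap B(x,r)$ through the ``outer'' spherical part $\partial B(x,r)$ rather than through $\partial\Omega$. The idea is to consider the nested balls $B(x, 2^{-k}r)$, $k = 0, 1, \dots, m$, where $m$ is chosen large enough that $q \in B(x, 2^{-m}r)$ is \emph{not} forced — rather, I would run the iteration from the \emph{outside in}. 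Concretely: for Brownian motion to exit $\Omega \cap B(x,r)$ through $\partial B(x,r)$ starting from a point in $B(x, 2^{-1}r) \cap \Omega$, it must first cross from $B(x,2^{-1}r)$ out to $\partial B(x,r)$, and by Lemma 3.1 (applied at scale $2^{-1}r$, i.e. with ``$r$'' there equal to $2^{-1}r$ and ``$2r$'' equal to $r$) the probability of reaching $\partial B(x,r)$ before hitting $\partial\Omega$, from any starting point in $B(x, 2^{-1}r)\cap\Omega$, is at most $\eta$. Iterating: the probability of crossing all the way from $B(x, 2^{-k}r)$ out to $\partial B(x,r)$ without hitting $\partial\Omega$ is at most $\eta^{k}$ by the strong Markov property, applied successively at the concentric spheres $\partial B(x, 2^{-k+1}r), \dots, \partial B(x, 2^{-1}r), \partial B(x,r)$. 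Since $q \in 2B \subset B(x,r)$ but not necessarily in any small $B(x, 2^{-k}r)$, this particular chaining needs $q$ near $x$, which is not given.

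The fix — and the step I expect to be the main obstacle in writing cleanly — is to combine the chaining with one more corkscrew/Harnack step so that the statement holds uniformly over \emph{all} $q \in 2B$, not just $q$ close to $x$. The standard way: first use the corkscrew ball to get a single point $p$ with a definite lower bound $u(p) \geq 1 - \eta^{m}$ for suitable $m$, by running the Brownian motion backwards — $p$ lies in $B(x,r)$, and from $p$ the motion either hits $\partial\Omega$ (inside $B(x,r)$) or exits through $\partial B(x,r)$; but one does not directly control this single crossing. Instead the correct and cleanest route is: choose $m = m(\ve, \beta)$ with $\eta^m < \ve' := \ve \cdot (\text{Harnack constant on } B(p, \tfrac{\alpha r}{2}) \text{ inside } B(p, \alpha r))^{-1}$, observe that $2B = B(p, \tfrac{\alpha r}{2})$ is a ball of radius comparable to $r$ sitting at distance $\geq \tfrac{\alpha r}{2}$ from $\partial(\Omega\cap B(x,r))$ — wait, it is only at distance $\geq \tfrac{\alpha r}{2}$ from $\partial B(x,r)$, and possibly close to $\partial\Omega$. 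On the nonnegative harmonic function $1 - u$ on $\Omega \cap B(x,r)$, Harnack on $4B$ gives $\sup_{2B}(1-u) \leq C_H \inf_{4B}(1-u) \leq C_H \inf_{4B}(1-u)$; so it suffices to bound $1 - u$ at the single point $p$. To bound $1 - u(p)$: cover the chain of scales by noting $p \in B(x, r) \setminus B(x, \alpha r)$ is possible, so the honest argument is to iterate Lemma 3.1 on the annuli $B(x, 2^{-k+1} r) \setminus B(x, 2^{-k} r)$ only down to scale $\sim \dist(p, x)$ — but since we only need \emph{one} point with a good bound and we are free to choose it, we instead choose $p$ via the corkscrew condition at a \emph{small} scale: apply \rf{(1.4)} at $x$ and scale $2^{-m}r$ to get $B(p', \alpha 2^{-m} r) \subset \Omega \cap B(x, 2^{-m}r)$; from $p'$, iterating Lemma 3.1 over the $m$ dyadic annuli down from $B(x,r)$ to $B(x, 2^{-m} r)$ gives $1 - u(p') \leq \eta^{m}$; then chain Harnack through a Harnack chain of bounded length (depending on $\alpha, m, d$) connecting $p'$ to a point in $4B$, and finish with Harnack on $4B$ as above. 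Picking $m$ large enough that $\eta^m$ times all accumulated Harnack constants is $< \ve$ yields \rf{(3.3)}, with $c_1, c_2$ depending only on $\ve, \alpha, \beta$ (and $d$). The obstacle is purely bookkeeping: making sure the Harnack chain length, hence the constant, depends only on the allowed parameters and not on $r$ — which it does, by scale invariance of the whole configuration.
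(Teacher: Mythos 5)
There is a genuine gap, and it appears at two places. First, your initial normalization — taking $B$ to be the corkscrew ball at scale $r$, i.e. $c_1=\alpha/4$ independent of $\ve$ — makes the conclusion you are trying to prove false. In the half-space $\R^2_+$ with $x=0$, $r=1$, a point $q\in 2B$ sits at height comparable to $r$, and the probability of exiting $B(x,r)\cap\Omega$ through the spherical part $\partial B(x,r)$ is bounded below by a dimensional constant; so $\inf_{2B}\omega(q,\partial\Omega\cap B(x,r),\Omega\cap B(x,r))>1-\ve$ cannot hold for small $\ve$ with that ball. The radius of $B$ must shrink as $\ve\to 0$: in the paper one takes $N$ with $\eta^N<\ve$, $C_1=1+2^N$, and then $c_1=\frac{\alpha}{4(1+C_1)}$, $c_2=\frac{1}{1+C_1}$, i.e. the corkscrew ball is taken at the reduced scale $\frac{r}{1+C_1}$. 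Second, your repair — connect the small-scale corkscrew point $p'$ to $4B$ by a Harnack chain of bounded length — uses a Harnack chain condition that is not among the hypotheses: corkscrew plus the capacity density condition (1.7) do not imply quantitative connectivity, and avoiding such assumptions is precisely the point of this paper's setting. Even in domains where chains exist, a chain from depth $2^{-m}r$ to depth $\sim r$ has length $\gtrsim m$, so the accumulated Harnack constant is $\geq C_H^{cm}$, and the requirement $C_H^{cm}\eta^m<\ve$ is unachievable when $\eta$ (from Lemma 3.1) is close to $1$; the bookkeeping therefore does not close even qualitatively.

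The paper's proof needs neither a special point nor any Harnack transfer. Iterating Lemma 3.1 by the maximum principle gives (3.5), a bound $\eta^N$ that is uniform over \emph{all} $p$ in the inner ball; applying it at an arbitrary $p\in\Omega$, centered at the nearest boundary point and at scale $\dist(p,\partial\Omega)$, yields the pointwise estimate (3.6): $\omega\bigl(p,\partial\Omega\setminus B(p,C_1\dist(p,\partial\Omega)),\Omega\bigr)<\ve$ for every $p\in\Omega$. Then, because $B$ is the corkscrew ball at scale $\frac{r}{1+C_1}$, every $q\in 2B$ satisfies $\dist(q,\partial\Omega)\leq |q-x|\leq\frac{r}{1+C_1}$, hence $B(q,C_1\dist(q,\partial\Omega))\subset B(x,r)$, and (3.6) applied at $q$ itself gives (3.4) directly. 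The uniformity over $q\in 2B$ that you identified as the main obstacle is automatic in this scheme, since the iterated estimate already holds at every point at its own scale; your outside-in iteration idea is essentially the right mechanism, but it must be run from each $q$ (at scale $\dist(q,\partial\Omega)$) inside a ball chosen small enough, rather than transferred from one good point by Harnack.
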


\vv

\begin{proof}

By  the maximum principle and induction (3.1)  implies

\begin{equation}\label{hmub2}
\sup_{B(x,r) \cap \Omega}\omega(p,\partial B(x,2^N r) \setminus \Omega, \Omega \cap  B(x,2^N r)) < \eta^N.
\end{equation}

\vv

\noi For $\ve >0$ take $N$ with $\eta^N < \ve$ and set $C_1 = 1 + 2^N.$  For any $p \in \Omega$ take $x \in \partial \Omega$ such that $|x-p| = \dist(p, \partial \Omega)$.  Applying (3.5) with $r = |x-p|$ yields
 
\begin{equation}\label{hmub3}
\omega(p, \partial \Omega \setminus B(p, C_1\dist(p,\partial \Omega)),\Omega) < \ve.
\end{equation}

\vv

\noi  By (1.4), $\Omega \cap B(x, \frac{r}{1 + C_1})$ contains a ball
 $B = B(p, \frac{\alpha r}{1 +C_1}).$ Therefore   (3.2) holds with

$$c_1 = \frac{\alpha}{4(1 +C_1)}$$ 

\noi and (3.3) holds with 

$$c_2 = 
\frac{1}{1 +C_1}.$$

\vv
\noi  If $q \in 2B = B(p,\frac{{\alpha}r}{2(1 +C_1)})$ then by (3.2) 
$\dist(q,\partial \Omega) \leq |q -x| \leq \frac{r}{1 + C_1}$.  Therefore 
$B(q,C_1\dist(q,\partial\Omega)) \subset  B(x,r)$, so that (3.6)  implies (3.4). 

\end{proof}

\vv

The next lemma is similar to Lemma 3.3 of [GMT].

\vv
\begin{lemma}\label{lem4} Assume $\Omega$ satisfies {\rm (1.4)} and {\rm (1.7)}.  Then there exists $\ve_0 >0$ and constants $c_{9}$ and $c_{10}$ depending only on $d$  and the constants $\alpha$ and $\beta$ of {\rm (1.4)} and {\rm (1.7)} such that if $0 < \ve < \ve_0$ and
\begin{itemize}
\vv
\item[(i)] $S\subset \partial \Omega$ is a Borel set, $x \in S$, $0 < r < \diam(\Omega),$  and $B(x,r) \cap \partial \Omega \subset \ S$, 
\vv
\item[(ii)] the ball $B_S = B(p_S,c_1r)$ satisfies {\rm (3.2), (3.3)} and {\rm (3.4)}  from Lemma {\rm 3.2}, 
\vv
\item[(iii)] $E_S \subset S \cap B(x,r)$ is a compact set such that 
\begin{equation}\label{thinout}
\inf_{2B_S} \omega(q,E_S,\Omega) \geq 1 - \ve,
\end{equation} 
\end{itemize}
\vv
\noi then there exists a non-negative harmonic function 
$u_S$  on $\Omega$ and a Borel function  $f_S$ such that  
$$0 \leq f_S \leq \chi_{E_S}$$ and for all $p \in \Omega,$

\begin{equation}\label{repf}
u_S(p) = \int_{E_S} f_S(y) d\omega(p,y,\Omega),
\end{equation}

\vv

\begin{equation}\label{ulb}
\inf_{B_S}u_S(p) \geq c_{9},
\end{equation}

\noindent and there exists a unit vector $\vec{e_Q} \in \R^{d+1}$ such that

\begin{equation}\label{gradlb}
\inf_{B_S} |\nabla u_Q(p) \cdot \vec{e_S}| \geq \frac{c_{10}}{c_1r}.
\end{equation}

\end{lemma}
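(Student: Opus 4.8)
The plan is to build $u_S$ directly from the harmonic measure of a suitable subset of $E_S$ and then extract the gradient lower bound from a qualitative nondegeneracy argument. First I would set $u_S(p) = \omega(p, E_S, \Omega)$, which is harmonic, nonnegative, bounded by $1$, and automatically has the representation \rf{repf} with $f_S = \chi_{E_S}$ (so the requirement $0 \le f_S \le \chi_{E_S}$ is met, possibly after shrinking $E_S$ or replacing $f_S$ by a smaller density if one wants room to maneuver later). Hypothesis (iii) gives \rf{ulb} immediately with $c_9 = 1-\ve \ge \tfrac12$, after choosing $\ve_0 < \tfrac12$, since $B_S \subset 2B_S$. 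So the only real content is \rf{gradlb}: on the corkscrew ball $B_S$, the harmonic function $u_S$ has gradient bounded below by a dimensional constant times $(c_1 r)^{-1}$ in some fixed direction.

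For \rf{gradlb} I would argue by a normalized compactness/contradiction scheme, or equivalently by a direct quantitative estimate using two facts: (1) $u_S \ge 1-\ve$ on $B_S = B(p_S, c_1 r)$ by \rf{ulb}, while (2) $u_S$ is \emph{not} close to $1$ everywhere on a ball of radius comparable to $r$ around $x$, because the escape estimate of Lemma 3.1 (applied in the annulus near $\partial\Omega$ away from $E_S \subset B(x,r)$) forces $u_S$ to drop: more precisely, there are points $q$ with $\dist(q,\partial\Omega) \sim c_1 r$ at distance $\sim r$ from $p_S$ where $u_S(q) \le \eta < 1$, by estimating $\omega(q, E_S, \Omega) \le \omega(q, \partial\Omega \setminus B(x,r)\cdot(\text{something}), \cdot) $ via Lemma 3.1 together with the fact that $4B_S \subset \Omega \cap B(x,r)$ separates $p_S$ from the relevant part of the boundary. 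Since $u_S$ varies by a definite amount $(1-\ve) - \eta \ge c > 0$ over a ball of radius $\lec c_1 r$ (interior Harnack-type balls chaining $B_S$ to such a $q$), interior gradient estimates for harmonic functions give $\sup_{\text{path}} |\nabla u_S| \gec c/(c_1 r)$; and then Harnack/interior estimates on $B_S$ itself (where $u_S$ is nearly constant equal to $1-\ve$, hence its gradient cannot oscillate wildly relative to its size) let me transfer a lower bound of the form $|\nabla u_S(p) \cdot \vec e_S| \ge c_{10}/(c_1 r)$ for all $p \in B_S$, where $\vec e_S$ is the (suitably averaged) direction of the gradient at the center $p_S$. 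This is essentially the argument of Lemma 3.3 of [GMT], transplanted from the Ahlfors-regular setting to the capacity-density setting, with Lemma 3.1 playing the role formerly played by the regularity lower bound.

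The main obstacle is making the gradient bound \emph{uniform over all of $B_S$ in a single direction $\vec e_S$}, rather than merely getting $\sup_{B_S} |\nabla u_S| \gec (c_1 r)^{-1}$. One has to rule out that $\nabla u_S$ rotates or vanishes somewhere inside $B_S$. The standard fix, which I would use, is a three-step normalization: rescale so $c_1 r = 1$ and $B_S = B(0,1)$; observe $u_S$ is harmonic on a fixed enlargement $4B_S = B(0,4) \subset \Omega$, takes values in $[0,1]$, satisfies $u_S \ge 1-\ve$ on $B(0,1)$, and (for $\ve$ small) cannot be identically $\ge 1-\sqrt\ve$ on $B(0,2)$ by the Lemma 3.1 boundary-escape estimate; then apply a compactness argument — if no dimensional $c_{10}$ worked, a sequence $u_{S_k}$ would converge locally uniformly to a harmonic $u_\infty$ on $B(0,4)$ with $u_\infty \equiv 1$ on $B(0,1)$ hence (by the identity theorem / unique continuation for harmonic functions) $u_\infty \equiv 1$ on $B(0,2)$, contradicting the quantitative non-saturation. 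The compactness step also pins the direction: on $B(0,1/2)$, say, the rescaled $u_{S_k}$ have gradients converging to $\nabla u_\infty \not\equiv 0$ there (again by unique continuation, since $u_\infty \not\equiv \text{const}$), so for large $k$ one fixed direction works on $B(0,1/2)$ with a uniform lower bound; shrinking the constant $c_3$ in \rf{1.17)} if necessary so that the relevant $B_S$ sits inside this smaller ball handles \rf{gradlb} as stated. The only care needed is to keep all the constants $\ve_0, c_9, c_{10}$ depending on $d, \alpha, \beta$ alone — which is automatic since after rescaling the only data entering are the dimension and the escape constant $\eta = \eta(\beta)$ from Lemma 3.1 together with the corkscrew aperture $\alpha$ determining how deep $4B_S$ sits.
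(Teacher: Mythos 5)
Your choice $u_S(p)=\omega(p,E_S,\Omega)$, $f_S=\chi_{E_S}$, is incompatible with the gradient lower bound \eqref{gradlb}. Hypothesis (iii) gives $1-\ve\le u_S\le 1$ on all of $2B_S$, so $0\le 1-u_S\le\ve$ there, and the interior gradient estimate for the nonnegative harmonic function $1-u_S$ gives $\sup_{B_S}|\nabla u_S|\lesssim \ve/(c_1r)$. That is a \emph{small} gradient, of order $\ve$, whereas \eqref{gradlb} needs a gradient bounded \emph{below} by a constant independent of $\ve$. No compactness or unique-continuation argument can rescue this: your candidate function genuinely has small gradient on $B_S$, so \eqref{gradlb} simply fails for it. The same calculation also undercuts the contradiction scheme itself: you argue that a limiting $u_\infty$ equal to a constant near $1$ on $B(0,1)$ is impossible, but after the rescaling $B_S\to B(0,1)$, the boundary $\partial\Omega$ (where $u_S$ drops) lies at distance $\ge 4$ from the origin, outside $\overline{B(0,4)}$, so nothing prevents the rescaled harmonic measures from being uniformly close to $1$ on $B(0,4)$. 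Moreover, even granting a nonconstant $u_\infty$, the negation of \eqref{gradlb} is weak enough (``for every direction, the directional derivative vanishes somewhere in $B(0,1)$'') that a nonconstant harmonic $u_\infty$ with rotating or vanishing gradient inside $B(0,1)$ would satisfy it; the limit need not produce a contradiction.

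The paper's proof sidesteps all of this by taking a genuinely nonconstant density $f_S$. It uses the capacity density condition \eqref{(1.7)} to build a Riesz (or, when $d=1$, logarithmic) potential $U_S$ of a positive measure supported on $\overline{B(q_S,c_1r)}\cap\partial\Omega$ for a boundary point $q_S$ sitting at a controlled distance and direction from $p_S$. The explicit kernel gives both a lower bound $U_S\ge c_9'$ on $2B_S$ and, crucially, a \emph{built-in} directional gradient lower bound in the fixed direction $\vec e_S=\overrightarrow{(q_S-p_S)}/|q_S-p_S|$, because $\nabla U_S(p)$ is an integral of $\overrightarrow{(p-y)}/|p-y|^{d+1}$ over points $y$ that all lie roughly in the direction $\vec e_S$ from $p_S$. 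Then $f_S=\chi_{E_S}\cdot U_S|_{\partial\Omega}$ and $u_S$ its harmonic extension; hypothesis (iii) makes $|u_S-U_S|\le\ve$ on $2B_S$, hence $|\nabla(u_S-U_S)|\lesssim\ve/(c_1r)$ on $B_S$ by Harnack, and taking $\ve_0$ small relative to $c_9',c_{10}'$ transfers the bounds from $U_S$ to $u_S$. The essential idea you are missing is that $f_S$ must vary on $E_S$ (it is a potential, not an indicator) so that the resulting $u_S$ can have a large gradient on $B_S$ at all; with $f_S=\chi_{E_S}$, it cannot.
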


\vv
\noi The  right side of (3.10) is  so written to display the radius $c_1r$ of $B_S.$  

\vv
\begin{proof}  Take $q_S \in S \cap \partial \Omega$ with $|q_S -p_S| < 2\dist(p_S,\partial \Omega).$ 
By (3.2) and (3.3)  we have 

\begin{equation}\label{relpos}
4c_1r < |p_S -q_S| < 2c_2r.
\end{equation}

\vv

\noi {\bf {Case I.}} $d \geq 2.$  By (1.7) and the 
definition of capacity there exists a positive measure $\mu_S$ supported on $\overline B(q_S,c_1 r) \cap \partial \Omega$ with  
$\int d\mu_S > \beta (c_1 r)^{d-1}$ such that the potential

$$U_S(p) = \int |p - y|^{1-d} d\mu_S(y)$$

\noi is harmonic on $\R^{d+1} \setminus {\rm {supp}} \mu_S  \supset \Omega$,  and satisfies

\begin{equation}\label{potbound}
0 < U_{S}(p) \leq 1 
\end{equation}

\noi for all $p \in \R^{d+1}.$  By Egoroff's theorem there is a compact set $F_S \subset \overline B(q_0,c_1 r) \cap \partial \Omega$ 
such that $\mu_S(F_S) \geq \beta (c_1 r)^{d-1}$ and 

$$\int_{B(p,\eta)} |p-y|^{1-d} d\mu_S(y) \to 0 ~ (\eta \to 0)$$

\noi  uniformly
on $F_S.$ Redefine $U_S$ to be 

\begin{equation}\label{pot}
U_S(p) = \int_{F_S} |p - y|^{1-d} d\mu_S(y).
\end{equation}

\noi Then $U_S$ is continuous on $\R^{d+1}$, harmonic on $\R^{d+1} \setminus F_S \supset \Omega$,  and satisfies
(3.12).

\vv

 By (3.11) and  (3.13),

\begin{equation}\label{potlb}
\inf_{2B_S} U_S(p) \geq \beta \Biggl(\frac{c_1 r}{|p_S -q_S| + 3c_1r}\Biggr)^{d-1} = \beta 7^{1-d} =c'_{9}
\end {equation}

\vv
 
Let $\vec e_S = \frac{\overrightarrow{(q_S -p_S)}}{|q_S -p_S|}.$ Then by (3.11) we have

\begin{equation}\label{relpos1}
\inf \Bigl\{\vec e_S \cdot \frac{\overrightarrow{(q-p)}}{|q-p|} q \in F_S, p \in B_S \Bigr\}
= \frac{c_2}{c_1} = 
\frac{4}{\alpha}. 
\end{equation}

\noi Hence by  (3.11), (3.13),  (3.15), and the formula 

\begin{equation}\label{grad}
\nabla U_S(p) = (1-d) \int_{F_S} \frac{\overrightarrow {(p-y)}}{|p-y|^{d+1}}d\mu_S(y),
\end{equation}

\noi we have  on $B_S$

\begin{equation}\label{biggrad}
|\nabla U_S(p) \cdot \vec e_S|  \geq \frac{4}{\alpha} \frac{(d-1)\beta {c_1r}^{d-1}}{(2c_1r + 2c_2r)^d}= 
\frac{c'_{10}}{c_1r}, 
\end{equation}

\noi in which 

$$c'_{10} = \frac{d-1}{{2{c_1}}^{d-2}} \frac{\beta}{\alpha} \bigl(\frac{\alpha}{4+\alpha}\bigr)^d$$

\noindent depends only on $d$, $\alpha$ and $\beta.$  
Since $U_S$ is continuous on $\overline \Omega$, 

$$U_{S}(p) = \int_{\partial \Omega} g_S(y) d\omega(p,y,\Omega)$$

\noindent  with continuous $g_S = U_{S}\vert \partial \Omega.$ 
 Set $f_S = \chi_{E_S}g_S$ and define $u_S$ by (3.8).  Finally take 

$$\ve_0 < {\rm {Min}} \Bigl(\frac{c'_{9}}{2}, \frac{c'_{10}}{3}\Bigr),$$

\noi assume $0 < \ve < \ve_0$, and assume Lemma 3.2 holds for
 $c_1$, $c_2$, $\alpha$ and $\ve.$  Then since  $|f_Q| \leq 1$, (3.7)  yields $\sup_{2B_S}|U_S - u_S| \leq \ve$.  Hence  (3,14) implies (4.4) for $c_{9} = \frac{c'_{9}}{2}$, and by (3.7) and Harnack's inequality 
$\sup_{B_S} |\nabla (U_S - u_S)| \leq \frac{2\ve}{c_1 r}$.  so that  (3.7) implies  (4.5) for 
$c_{10} =  \frac{c_{10}'}{3}.$

\vv
\vv

\noi {\bf {Case II:}}  $d=1$.  Decreasing $c_1$ and $c_2$ if necessary,  we have, again by Egoroff's theorem,  compact sets 
$F^{\pm}_S 
\subset \overline B(x,r) \cap S$ such that ${\rm {Cap}}(F^{\pm}_S) \geq \frac{\beta c_1r}{2} \equiv  e^{-\gamma}$ and  probability measures $\mu_{\pm}$ supported on $F^{\pm}_S$ so that the logarithmic potentials

$$U_{\pm}(p) = \int_{F_S^{\pm}} \log\frac{1}{|p-y|} d\mu_{\pm}(y)$$
are continuous on $\R^2$ and harmonic on $\R^2 \setminus F_S^{\pm}$ and satisfy $U_{\pm} < \gamma$ on $\R^2 \setminus F_S^{\pm}$ and for small $\eta,$  $\gamma - \eta \leq U_{\pm} \leq \gamma$ on $F_S^{\pm}.$  Because capacity is bounded by diameter, we can, by choices of $c_1$ and $c_2$,  position 
$F_S^{\pm}$ so that 

$$F_S^{+} \subset B(p_S,2c_2r)$$

\noi but

$$F_S^{-} \subset \R^2 \setminus B(p_S,4c_2r).$$ 

\vv

\noi Then on $\R^2 \setminus (F_Q^{+} \cup F_S^{-})$ the  function $U^{+} - U^{-}$ is harmonic and bounded, because the logarithmic singularities at $\infty$ cancel, and by the
 choices of $F_S^{\pm}$, 

$$\sup_{F^{+} \cup F^{-}}|U^{+} - U^{-}| \leq \gamma - \log\Bigl(\frac{1}{2c_r2}\Bigr) = 
\log \Bigl(\frac{4c_2}{\beta c_1}\Bigr),$$

\vv

$$\inf_{2B_S} (U^{+} - U^{-}) \geq \log\Bigl(\frac{1}{2c_2r - 2c_1r}\Bigr) - \log\Bigl(\frac{1}{4c_2r + 2c_1r}\Bigr) = \log\Bigl(\frac{2c_2 + c_1}{c_2 - c_1}\Bigr),$$ 

\vv
\noi and for some unit vector $\vec{e_S}$,

$$\inf_{B_S}|\Bigl| \nabla(U^{+} - U^{-}) \cdot \vec{e_S}\Bigr| \geq \frac{c_{10}''}{r}.$$

\noi Then (3.8), (3.9) and (3.10) hold for 

$$f_S = \Bigl(2 \log\bigl(\frac{4c_2}{c_1}\bigr)\Bigr)^{-1} \Bigl(\log\bigl(\frac{4c_2}{c_1}\bigr) + U^{+} - U^{-}\Bigr)\chi_{E_S}.$$

\end{proof}

\vv 

 \section{Proof of Theorem 1.2 Part A.}

We follow the proof of Lemma 3.7 of [GMT].  Replacing $\ve$ by $\frac{\ve}{4}$  and $R$ by $CR$,  $C > 1,$  and setting $r_j = \dist(p_j,\partial \Omega)$ and $B_j = B(p_j, r_j),$ we can by Lemma 3.2 and Harnack's inequality assume $E_j \subset B(x,R)$,  $4B_j = B(p_j,4r_j) \subset \Omega \cap B(x,R)$ and

$$\inf_{2B_j} \omega(p,E_j,\Omega) > 1 - \frac{\ve}{2}. \eqno (4.1)$$

\noi  Then the conclusion of Theorem 1.2 Part A is immediate from:

\vv
\begin{lemma} Assume {\rm (1.4)}, {\rm (1.7)} and either {\rm (a)} or {\rm (b)} hold for $\Omega.$ Then if $0 < \ve < \ve_0$ there is $C(\ve)$ such that if for $j = 1, 2, ....$ 
there exist balls  
$B_j = B(p_j,r_j) \subset \Omega \cap B(x,R)$, $x \in \partial \Omega$
and sets $E_j \subset \partial \Omega$ with {\rm (4.1)}
and

$$E _j \cap E_k = \emptyset, j \neq k, \eqno (4.2)$$

\noi then 

$$\sum r_j^d \leq C(\ve) R^d. \eqno (4.3)$$

\end{lemma}

\vv

\begin{proof} By   
Lemma 3.3 there exists  a Borel function $0 \leq f_j \leq \chi_{E_j},$ 
such that the harmonic function 

\begin{equation}\label{repf1}
u_j(p) = \int_{E_j} f_j(y) d\omega(p,y,\Omega),
\end{equation}

\noindent satisfies 

\begin{equation}\label{ulb}
\inf_{2B_j}u_j(p) \geq c_{12},
\end{equation}

\noindent and there exists a unit vector $\vec{e_j} \in \R^{d+1}$ such that

\begin{equation}\label{gradlb}
\inf_{B_j} \bigl|\nabla u_j(p) \cdot \vec{e_j}\bigr| \geq \frac{c_{12}}{r_j}.
\end{equation}

\noi Set $u = \sum u_j.$ 
Then by (4.1)  we have 
$\sup_{2B_{j}}|u-u_j| \leq \frac{\ve}{2}$, so that by Harnack's inequality
$\sup_{B_{j}}|\nabla (u-u_j)| \leq \frac{2\ve}{r_j}$. Therefore  

$$|\nabla u| > \frac{{c_{11} - 3\ve}}{r_j} $$

\noi on $B_{j}$ and

\begin{equation}\label{contra}
\int_{B_{j} \cap \Omega} |\nabla u(x)|^2 \dist(x,\partial\Omega) dx \geq c_{12} r_j^d.
\end{equation}

\noindent Assuming {\rm (a)} holds on $\Omega$ with constant $C$ and summing, we obtain

$$\sum_j (\dist(p_j,\partial \Omega)^d \leq \frac{1}{c_{12}} \int_{B \cap \partial \Omega} 
 |\nabla u(x)|^2 \dist(x,\partial \Omega) \leq C R^d,$$

\noi which yields (4.3) when {\rm (a)} holds. 

\vv

 Now assume {\rm (b)} holds for $\Omega$ and $\ve < \frac{c_{11}}{3}$. If $g \in W^{1,1}_{\rm {loc}}(\Omega)$ satisfies (1.2) for $u$ and $\ve < \frac{c_{11}}{3}$  
then, using (4.6) and (4.7) for $u_j$, we obtain

$$\int_{B_j} |\nabla g(x)|dx \geq c_{13} r_j^d.$$

\noi Thus from (a) or (b) we conclude that (4.3) holds.  \end{proof}

\vv

We  note two corollaries of Lemma 4.1.

\vv
\begin{corollary}\label{coro1} Let $\Omega \subset \R^{d+1}$ be a corkscrew domain for which 
{\rm (1.7)} holds.  If {\rm (a)} or {\rm (b)} holds for $\Omega$ then there a constant $C > 0$ such that for all $x \in \partial \Omega$ and all $r > 0,$ 

\begin{equation}\label{upperAR}
\HH^d(B(x,r) \cap \partial \Omega) \leq Cr^d.
\end{equation}
\end{corollary} 

\vv
\begin{proof}  Cover any compact $K \subset B(x,R) \cap \partial \Omega$ by a minimal set $\cF$ of $N_n$ distinct closed dyadic cubes of side $2^{-n}$. Partition $\cF$ into $3^{d+1}$ disjoint families 
$\cF'$ so that  $\dist(Q_1,Q_2)  \geq  2^{-n}$ if $Q_1 \neq  Q_2 \in \cF'$ and fix  any such family  
$\cF$.  By (1.4) and (1.7) and Lemma 3.2 there exists $c_{14}$ so that for every 
$Q_j \in \cF'$ there exists a ball 
$B_j = B(p_j, c_{14} 2^{-n}) \subset \Omega \cap {\frac{5}{4}} Q_j$  
with $\inf_{B_j} \omega(p,Q_j \cap \partial \Omega, \Omega) > 1- \ve$,  where $\ve$ fixed and small.
Then by Lemma 4.1   

$$ (c_{14}2^n)^d  \#{\cF'} \leq C(\ve) r^d$$

\noi which yields

$$\HH^d(K) \leq 3^{d+1} c_{14}^{-d} C(\ve) r^d.$$
\end{proof}

Merged with the results of  
[HMM2] and [GMT] Corollary 4.2  yields: 

\vv

\begin{corollary}\label{coro2}  If $\Omega \subset \R^{d+1}$ is a corkscrew domain for which there exists constant $c >0$ such that for all $x \in \partial \Omega$ and all $0 < R < \rm{diam}(\partial \Omega),$

\begin{equation}\label{lowerAR}
\HH^d(B(x,r) \cap \partial \Omega) \geq cr^d,
\end{equation}

\noi then {\rm (a)} or {\rm (b)} holds for $\Omega$ if and only if $\partial \Omega$ is uniformly rectifiable.
\end{corollary}

\vv

\section{Modified Christ-David cubes}  

To  prove Theorem 1.3. we follow  the construction in  [Da] very closely, 
although the arguments from  [Ch] and [DM] would also work.  
To start we use {\rm (a)} or {\rm (b)} to get a grip on the small boundary condition (1.18).

\vv

\begin{lemma}\label{lem6} Let $0 < \eta < 1$  and let $N$  be a positive  integer.  Assume $\Omega$ is a bounded corkscrew domain with {\rm (1.7)} and assume the conclusion of Theorem {\rm {1.2}} holds for
$\Omega.$   Then  for any $x \in \partial \Omega$ and any $j \in N$ there exists an open ball 
$B_j(x) = B_j(x,r)$ having center $x$ and radius

$$r \in (2^{-Nj},(1 + \eta)2^{-Nj})$$  

\noi such that if 

$$\Delta_j(x) = B_j(x) \cap \partial \Omega,$$ 

 \vv

\begin{align}
E_j(x)  &= \bigl\{y \in \Delta_j(x): {\rm {dist}}(y, \partial \Omega \setminus \Delta_j(x)) < \eta^2 2^{-Nj} \bigr\} \nonumber \\
&\cup \{y \in \partial \Omega \setminus \Delta_j(x): {\rm {dist}}(y, \Delta_j(x)) < \eta^2 2^{-Nj})\} \nonumber
\end{align}

\vv
\noi and  $m_j(x)$ is the minimum number of closed balls ${\overline{B(p,\eta^2 2^{-Nj})}}$ needed to
 cover $E_j(x)$, 
then

\begin{equation}\label{rings}
m_j(x) \leq C_d \eta^{1 -2d}, 
\end{equation}

\vv

\noi in which  the constant $C_d$ depends only on $d$ and the constant in {\rm (1.12)}.

\end{lemma}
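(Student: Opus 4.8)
The plan is a pigeonhole argument over $\sim\eta^{-1}$ candidate radii inside the admissible interval $(2^{-Nj},(1+\eta)2^{-Nj})$, the essential averaging input being the Whitney-cube packing bound of Proposition 4.2. Write $\rho=\eta^{2}2^{-Nj}$. First I would note that, whatever radius $r$ is used for $B_j(x)$, the set $E_j(x)$ lies in the thin shell $\Sigma_r:=\{y\in\partial\Omega:\ r-\rho<|y-x|<r+\rho\}$: if $y$ belongs to the first part of $E_j(x)$ then $|y-x|<r$ and $y$ is within $\rho$ of some $z\in\partial\Omega$ with $|z-x|\ge r$, hence $|y-x|>r-\rho$, and the second part of $E_j(x)$ is handled symmetrically. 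So it is enough to find one admissible $r$ for which $\Sigma_r$ is covered by $\lesssim\eta^{1-2d}$ closed balls of radius $\rho$. Since for $\eta$ bounded below the ball $B(x,2\cdot 2^{-Nj})\supset E_j(x)$ is trivially covered by $\lesssim\eta^{-2d}\le C_d\eta^{1-2d}$ such balls, I may assume $\eta$ is at most a dimensional constant.

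Next I would discretize: set $r^{(m)}=2^{-Nj}+5m\rho$ for $1\le m\le M$ with $M=\lfloor 1/(5\eta)\rfloor\sim\eta^{-1}$, so each $r^{(m)}$ is admissible and the $\rho$-neighborhoods of the shells $\Sigma_{r^{(m)}}$ are pairwise disjoint (radial gap $\rho$ between consecutive thickened shells). For each $m$ fix a maximal $2\rho$-separated subset $\{y^{(m)}_i\}_{i\in I_m}$ of $\Sigma_{r^{(m)}}$; then all the open balls $B(y^{(m)}_i,\rho)$, over all $m$ and $i\in I_m$, are pairwise disjoint and contained in some $B(x,R)$ with $R\sim 2^{-Nj}$. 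Since $y^{(m)}_i\in\partial\Omega$, I apply (1.4), (1.7) and Lemma 3.2 at scale $\rho$ to get $p^{(m)}_i\in\Omega$ with $\dist(p^{(m)}_i,\partial\Omega)\sim\rho$, a ball $B^{(m)}_i=B(p^{(m)}_i,c_1\rho)\subset\Omega\cap B(y^{(m)}_i,\rho)$, and, writing $E^{(m)}_i=\partial\Omega\cap B(y^{(m)}_i,\rho)$, the bound $\inf_{2B^{(m)}_i}\omega(\cdot,E^{(m)}_i,\Omega\cap B(y^{(m)}_i,\rho))>1-\ve$ from (3.4); by the maximum principle $\omega(p^{(m)}_i,E^{(m)}_i,\Omega)>1-\ve$ as well. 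Let $Q^{(m)}_i\in\cW(\Omega)$ be the Whitney cube containing $p^{(m)}_i$, so $\ell(Q^{(m)}_i)\sim\rho$ by (3.2). The sets $E^{(m)}_i$ are pairwise disjoint, so Proposition 4.2 gives
\[
\sum_{m=1}^{M}\sum_{i\in I_m}\ell(Q^{(m)}_i)^{d}\ \lesssim\ R^{d}\ \sim\ (2^{-Nj})^{d},
\]
and since $\ell(Q^{(m)}_i)\sim\eta^{2}2^{-Nj}$ this reads $\sum_{m=1}^{M}\#I_m\lesssim\eta^{-2d}$.

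To conclude, averaging over the $M\sim\eta^{-1}$ indices $m$ produces an $m^{\ast}$ with $\#I_{m^{\ast}}\lesssim\eta\cdot\eta^{-2d}=\eta^{1-2d}$. I then take $B_j(x)=B(x,r^{(m^{\ast})})$; by maximality of the separated set, $E_j(x)\subset\Sigma_{r^{(m^{\ast})}}\subset\bigcup_{i\in I_{m^{\ast}}}B(y^{(m^{\ast})}_i,2\rho)$, and covering each ball of radius $2\rho$ by at most $C_d$ closed balls of radius $\rho=\eta^{2}2^{-Nj}$ gives $m_j(x)\le C_d\#I_{m^{\ast}}\lesssim\eta^{1-2d}$, with constant depending only on $d$ and the constants in (1.4), (1.7) and (a), (b).

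I expect the only real point of the proof to be the gain of one power of $\eta$: treating the shells $\Sigma_{r^{(m)}}$ one at a time only gives $m_j(x)\lesssim\eta^{-2d}$, and the improvement comes precisely from feeding all $\sim\eta^{-1}$ pairwise disjoint shell families into a single application of Proposition 4.2 and pigeonholing only afterwards. The two places that require careful bookkeeping are the disjointness of the thickened shells (the role of the $5\rho$ spacing) and the passage from harmonic measure in the small domain $\Omega\cap B(y^{(m)}_i,\rho)$, which is what Lemma 3.2 delivers, to harmonic measure in $\Omega$, which Proposition 4.2 requires — a routine maximum-principle comparison.
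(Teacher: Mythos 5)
Your proposal is correct and follows essentially the same route as the paper: the paper also counts the scale-$\eta^{2}2^{-Nj}$ pieces of $\partial\Omega$ near $x$ (getting $\lesssim\eta^{-2d}$ of them by the very construction you invoke through Proposition 4.2, i.e.\ Lemma 3.2, Lemma 3.3 and the summed functions $u_Q$ fed into (a) or (b)) and then pigeonholes over the $\sim\eta^{-1}$ radial positions in the annulus $\overline{B(x,(1+\eta)2^{-Nj})}\setminus B(x,2^{-Nj})$ to gain the extra factor of $\eta$. The only differences are cosmetic: you use maximal $2\rho$-separated nets and disjoint discretized shells where the paper uses dyadic cubes of side $\sim\eta^{2}2^{-Nj}$ and a pair of adjacent subshells of a partition of the annulus, and you cite Proposition 4.2 as a black box instead of repeating its proof.
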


\vv

\begin{proof}  Partition the closed shell 
$\Sigma = \overline {B(x,(1 +\eta)2^{-Nj})} \setminus B(x,2^{-Nj})$
into a family $\cR$ of at most $1 + [\frac{1}{\eta}]$ closed shells of width $\eta^2 2^{-Nj}$.
 Fix $2^{-n} \sim \eta^2 2^{-Nj}$,  let 
$\cE$ be the set of closed dyadic cubes $Q$ of side $2^{-n}$ such that $Q \cap \partial \Omega 
\cap \Sigma \neq \emptyset$
and let $M = \#{\cE}.$  Choose a maximal subset  $\cE_0 \subset \cE$ of pairwise disjoint closed cubes.  Then $\cE_0$ has cardinality  
$\#{\cE_0} \geq c_{14}3^{-d - 1}M$ and the enlarged cubes $\frac{5}{4}Q, Q \in \cE_0$ are pairwise disjoint.  For each $Q \in \cE_0$ there exists by  (1.7) a compact set $E_Q \subset \frac{5}{4}Q \cap \partial\Omega$ and
 a ball $B(p_Q, \alpha \eta^2 2^{-j}) \subset \frac{5}{4}Q \cap \Omega$ satisfying the conclusions of Lemma 3.2 and Lemma 3.3. Now we can follow the proof Corollary 4.2 to conclude that 
$\#{\cE_0}(\eta^2 2^{-Nj})^d \leq C 2^{-Njd}$.  Hence $M \leq C\eta^{-2d}$ and there exists  a pair of adjacent closed subrings in $\cR$ whose union  meets at most $c_{15}C\eta^{1 -2d}$ dyadic cubes from $\cE.$  That implies 
(5.1).

\end{proof}

\vv

\noi {\bf {Proof of Proposition 1.3.}} For $j \geq 0$ let $V_j$ be a maximal subset 
of $\partial \Omega$ such that 
when $x, x' \in V_j$ $|x-x'| \geq 2^{-jN}$ and for $x \in V_j$ let $B_j(x)$ be the ball given by 
Lemma 5.1  and set  $\Delta_j(x) = \partial \Omega \cap B_j(x)$. Put a total order, written  $x < y$, on the finite  set $V_j$  and define 

$$\Delta_j^*(x) = \Delta_j(x) \setminus \bigcup_{y  < x}  \Delta_j(y).$$

\noi Then for each $j,$  (1.10), (1.11), and (1.12) hold for the family $\{\Delta_j^*(x)\}$  and because the balls $B(x,(1-\eta)2^{-Nj}),  x \in V_j$ are disjoint we have 

\begin{equation}\label{6.2}
B(x,(1-\eta)2^{-Nj}) \subset \Delta_j^*(x)
\end{equation}

\noi for every $x \in V_j.$  Because $\partial \Omega \subset \R^{d+1}$ there is constant $M_d$ independent of $j$ such that

\begin{equation}\label{Md}
\#\{y \in V_j: y < x ~{\rm {and}} ~ B_j(y) \cap B_j(x) \neq \emptyset\} \leq M_d.
\end{equation}

\noi Therefore by (5.1) the minimum number $m_j^*$ of closed balls $\overline {B(p,\eta^2 2^{-Nj})}$ needed to cover 

\begin{align}
E_j^*(x) &= \{y \in \Delta_j^*(x): {\rm {dist}}(y, \partial \Omega) < \eta^2 2^{-Nj} \} \nonumber \\
 & \cup \{y \in \partial \Omega \setminus \Delta_j^*(x): {\rm {dist}}(y, \Delta_j^*(x)) < \eta^2 2^{-Nj}\} \nonumber 
\end{align}

\vv
\noi has the upper bound

\begin{equation}\label{mrings}
m^*_j(x) \leq C_dM_d \eta^{1 -2d}. 
\end{equation}

\vv
However the families $\{\Delta_j^*\}_{j \geq 0}$ may not satisfy the nesting condition (1.13) or the small boundary condition 
(1.15). For those reasons we further refine each set $\Delta_j^*$, still following [Da]. 
If $x \in V_j, j \geq 1$, there exists by (1.11) and (1.12) a unique 
$\varphi(x) \in V_{j-1}$ such that  $x \in \Delta_{j-1}^*(\varphi(x)).$  For any $j$ and $x \in V_j$ define
$D_{j,0}(x) = \Delta_j^*(x)$ and for $n \in \N$,

$$D_{j,n}(x) = \bigcup\{\Delta_{j+n}^*(y): \varphi^n(y) =x\}$$

\noi   Then for any ${j,n}$

\begin{equation}\label{union1}
\bigcup\{D_{j,n}(x): x \in V_j\} = \partial \Omega
\end{equation}

\noi and by induction

\begin{equation}\label{(5.6)}
D_{j,n}(x) = \bigcup\{D_{j,n-k}(y): \varphi^k(y) = x\}.
\end{equation}

\noi for $0 \leq k \leq n.$

\vv

Write $\dist_{\HH}(A,B)$ for the Hausdorff distance between subsets $A, B$ of $\R^{d+1}.$ 
Since $\diam(\Delta_j^*) \leq(1 + \eta)2^{-Nj}$ we have $$\dist_{\HH}(D_{j,1}(x),\Delta_j^*(x)) \leq (1 + \eta)2^{-N(j+1)},$$, 

\noi so that by 
(5.6) and induction

\begin{equation}\label{(5.7)}
\dist_{\HH}(D_{j,n}(x),D_{j,n+1}(x)) \leq (1 + \eta)2^{-N(j+n)}.
\end{equation}

\vv
\noi Hence for each $j$ and $x \in V_j$ the sequence of  $\{\overline{D_{j,n}(x)}\}$ of compact sets converges in Hausdorff metric to a compact set $R_j(x).$  It is clear from (5.5) that for any fixed $j$

\begin{equation}\label{(5.8)}
\bigcup \{R_j(x): x \in V_j\} = \partial\Omega
\end{equation}

\noi because if $y \in \partial \Omega$ then $y \in D_{j,n}(x^{(n)})$ for some $x^{(n)} \in V(j)$ and because $V(j)$ is finite  there is $x \in V(j)$ with $y \in \overline {D_{j,n}(x)}$ for infinitely many $n$. 

\vv
Since we took closures (1.12) may not hold for the sets $\{R_j(x)\}$, and like [Da] we must alter them one final time. By induction we can choose the ordering on the finite set $V_j, j \geq 1$ so that $x < y$ if 
$\varphi(x) < \varphi(y).$ Then define, for all $j$ and $x \in V(j)$

\begin{equation}\label{finQ}
S_j(x) = R_j(x) \setminus \bigcup_{V(j) \ni y < x} R_j(y).
\end{equation}

\noi Then it is clear from
(5.8) that  (1.12) and (1.13) hold for the family $\SSS = \bigcup_j \{S_j\},$ 
and since by (5.7)  

\begin{equation}\label{(5.10)}
\diam(S_j(x)) \leq \diam(R_j(x)) \leq \sum_{k =j}^{\infty} 2(1+\eta)2^{-Nk} \leq 4(1+\eta)2^{-Nj}.
\end{equation}

To obtain the lower bound in (1.10) and also (1.13), (1.14) and (1.15) we need $2^{-N}$  to be small compared to $\eta$.  Assume

\begin{equation}\label{etaN}
2^{-N} \sim  \eta^2 < \frac{1}{9}.
\end{equation}  

\noi Then by (5.2) and (5.7) we have for $x \in V_j,$

\begin{align}
\dist(x,\partial \Omega \setminus D_{j,n})  &\geq (1-\eta)2^{-Nj} - \sum_{k > j} 2(1+\eta)2^{-Nk}
\nonumber \\ 
& \geq  2^{-Nj} \Bigl(1-\eta - 2(1+\eta) \frac{2^{-N}}{1 - 2^{-N}}\Bigr) \geq \frac{2^{-Nj}}{3}.
\nonumber 
\end{align}

\vv
\noi  This implies  (1.14) and  with (5.10)  it also implies (1.10). 

\vv

To show (1.13) suppose $u \in \Delta_j(x) \cap \Delta_{j+1}(y).$  Then by (5.7) $u = \lim x_n$ where 
$x_n \in V_n, x_{n+1} \in \Delta_n^*(x_n)$ and $x_j =x,$ and $u = \lim y_n$ where $y_n \in V_n, y_{n+1} \in \Delta_n^*(y_n)$ and $y_{j+1} =y.$ Hence $u \in \bigcap_{n \geq j}  R_n(x_n) \cap \bigcap_{n \geq j+1} R_n(y_n)$ so that by the definition 
(5.9)
$y_n = x_n$ for all $n \geq j+1$ and $S_{j+1}(y)  \subset S_j(x).$

\vv

To verify the small boundary condition (1.18) we can 
by (5.2) assume $\tau = 2^{-Nk},  k \geq 1.$  Let $x \in V_j$ and write $S = S_j(x).$   
Then by (5.7) and (5.10) $N_{\tau}(S)$ is comparable to 

$$\#\{y \in V_{j+k}: S_{j+k}^*(y) \cap \Delta_{\tau}(S) \neq \emptyset\},$$

\noi and by (5.4) and (5.11) this number is bounded 
by $(C_d M_d \eta^{1-2d})^k \sim (C_dM_d)^k \tau^{\frac{1}{2}}$, which,  for $C > 2$ and $\tau$ small, is bounded by 
  $C \tau^{1/C -d}$. 
\endproof 

\vv

\section{A Corona Decomposition and the Proof of Theorem 1.4 Part A.}   Assume $\Omega \subset \R^{d+1}, d \geq 1,$ is a domain satisfying (1.4), (1.7), and either {\rm (a)} or 
{\rm (b)} and let $\SSS$ be a family of subsets of $\partial \Omega$ satisfying the conclusions of Proposition 1.3.
We shall prove there exist constants $\ve_1, A_0$ and $C$ such that {\rm (1.24)} holds with constant $C$ whenever $0 < \delta < \frac{\ve}{3} < 
 \frac{\ve_1}{3}$ and 
$A > A_0$,  $S_0 \in \SSS$, and $G_k(S_0)$ are its generations defined for $\delta$ and $A$.   Recall that by Proposition  1.3 the family $\SSS$ has the properties  (1.17), (1.18), and (1.19).  

\vv

\begin{lemma}\label{lem7}  Let $S \in \SSS$ and let $\{S_j\} \subset \SSS$ be a family of cubes $S_j \subset S$ 
satisfying $S_j \cap S_k = \emptyset$ when $j \neq k.$  If 
$S_j \in \HD(S)$ for all $j$, then

\begin{equation}\label{(6.1)}
\sum \ell(S_j)^d \leq \frac{C_1}{A} \ell(S)^d,
\end{equation}

\noindent while if $S_j \in \LD(S)$ for all $j$, then

\begin{equation}\label{(6.2)}
\sup_{B_S} \sum_{S_j} \omega(p,S_j) \leq C_2 \delta,
\end{equation}

\noi where $C_1$ and $C_2$ depend only on $d$, $\delta$ and the constant in (1.12).

\end{lemma}
\vv

\begin{proof}
 Assertion (6.2) follows from  (1.20), (1.21), (1.22), (1.25) and Lemma 4.1, with constant $C_2$ depending only on 
$\delta$ and the constants in Propostion 1.3 and (1.12).    

\vv  

Since the definition of $\HD$ entails $\omega(p_S,2S_j,\Omega)$ and not $\omega(p_S,S_j,\Omega)$,
the proof of (6.1) requires more work.  Note that if $2S_k \cap 2S_j \neq \emptyset$ and 
$\ell(S_k) \leq \ell(S_j)$ then by (1.10)

 $$S_k \subset B(x_{S_j},C\ell(S_j)),$$

\noi  in which the constant $C$ depends only on the upper bound in (1.10) and thus only on $\alpha$, $\beta$ and $d$.  Hence by Theorem 1.2, Part A, 

$$\sum \bigl\{\ell(S_k)^d: 2S_k \cap 2S_j \neq \emptyset, ~~ \ell(S_k) 
\leq \ell(S_j)\bigr\} \leq C_1 \ell(S_k)^d,$$

\vv
\noi and by a Vitali argument there exists $\{S'_j\} \subset \{S_j\}$ with $2S'_j \cap 2S'_k = \emptyset$ and 

$$\sum \ell(S_j)^d \leq C_1 \sum \ell(S'_j)^d \leq \frac{C_1}{A}\sum \omega(p_S,2S'_j,\Omega) \ell(S)^d 
\leq \frac{C_1}{A} \ell(S)^d.$$

\end{proof} 

Turning to the proof of Part A of Theorem 1.4, we now assume $A  > 2C_1.$
To prove (1.29) we separate high and  low density cubes. For  $S \in \SSS$ let $GH_1(S)$ be a family of high density cubes $S' \in G_1(S)$ and  by induction

\begin{equation}\label{genH}
GH_k(S) = \bigcup_{S' \in GH_{k-1}(S)} GH_1(S').
\end {equation}

\noi  Thus if $S_k \in GH_k(S),$ then

\begin{equation}\label{highstring}
S_k \subset S_{k-1} \subset .... S_1 \subset S_0 =S
\end{equation} 

\noi in which for $j >0$ 

$$S_{j+1} \in \HD(S_j),$$

\vv

\noindent so that all ancestors of $S_k$ except possibly the first  are $\HD$ cubes.  Write 

$$GH(S) = \bigcup_{k \geq 1} GH_k(S).$$

\noi  Then
by (6.1)

\begin{equation}\label{Hsum}
\sum_{GH(S)} \ell(S')^d  = \sum_{k=1}^{\infty} \sum_{GH_k(S)} \ell(S')^d \leq \frac{C_1}{A -C_1} \ell(S)^d.
\end{equation}

Similarly, let $GL_1(S)$ be a family of low density cubes $S_j \in G_1(S)$ and  by induction

\begin{equation}\label{genL}
GL_k(S) = \bigcup_{S' \in GL_{k-1}(S)} GL_1(S').
\end {equation}

\noi  Thus if $S_k \in GL_k(S),$  then 

\begin{equation}\label{lowstring}
S_k \subset S_{k-1} \subset .... S_1 \subset S_0 =S
\end{equation}

\noi and $S_{j+1} \in \LD(S_j)$  for $j > 0.$  Write

$$GL(S) = \bigcup_{k \geq 1} GL_k(S).$$

\vv
\begin{lemma}\label{lem8}  Assume   
$\ve$ in {\rm {(1.2)}} is small and $\delta \leq \ve.$ Then there exists constant $C_2$ such that for any $S_0 \in \SSS$  

\begin{equation}\label{(6.5)}
\sum_{GL(S_0)} \ell(S)^d  = \sum_{k =1}^{\infty} \sum_{GL_k(S_0)} \ell (S)^d \leq C_2 \ell(S_0)^d.
\end{equation}

\end{lemma}

\vv
\begin{proof}  The proof is like the proof of (6.2). 
 For any $S \in GL(S)$ define

$$E_S = S \setminus \bigcup_{S' \in GL_1(S)} S'.$$ 

\noi Then $E_{S_1} \cap E_{S_2} = emptyset$ for $S_1 \neq S_2$ and that  $\inf_{B_S} \omega(p,E_S,\Omega) > 1 - \ve$  so that Lemma 4.1 and (1.13) imply (6.8).  \end{proof}

\vv
Now the proof of (1.29) follows by interlacing (6.5) and (6.8).   Write

$$L_1(S) = \sum_{GL(S)} \ell(S')^d,  ~~H_1(S) = \sum_{GH(S)} \ell(S')^d,$$

\noi and by induction

$$L_{k+1}(S) = \sum_{GH(S)} L_k(S'), ~~~H_{k+1}(S) = \sum_{GL(S)} H_k(S').$$

Then 

$$\sum_{k=1}^{\infty}\sum_{G_k(S_0)} \ell(S)^d  = \sum_{k=1}^{\infty}(L_k(S_0)  + H_k(S_0))$$

\noindent and by (6.5)  and (6.8)

$$L_{k+1}(S) \leq C_2 H_k(S)$$

$$H_{k+1}(S) \leq \frac{C_1L_k(S)}{A -C_1},$$

\vv
\noi so that writing $L_0(S) = H_0(S) =1$
and taking
$A-1 > C_1 +C_1C_2$ yields

$$\sum_{k=1}^{\infty}\sum_{G_k(S_0)} \ell(S)^d \leq \frac{AC_2 + C_1 + C_1C_2}{A -C_1 -C_1C_2}.$$

\vv
\noi That proves (1.29) and  Theorem 1.4 Part A. 
 
\vv

\section{A domain $\tilde \Omega$}

Assume  $\Omega$ is a corkscrew domain satisfying  (1.7) and  $\SSS$ is a family of subsets of $\partial \Omega$ 
 having properties (1.13) - (1.18) of  Theorem 1.3, and their consequences (1.20), (1.21) and (1.22).  Fix constants 
$\ve, \delta$, $N$, $A$ and $C$ with $0 < \delta < \frac{\ve}{3}$ and $A$ so large  that (1.27) holds
for any $S_0 \in \SSS$ when the generations $G_k(S_0)$  are define by (1.22) and (1.25). Also assume $\SSS$ satisfies the conclusion of Lemma 4.1 or, equivalently, hypothesis (ii) of Theorem 1.5. Under those assumptions  we construct a domain $\tilde \Omega \subset \Omega$ with $\partial \Omega \subset \partial \tilde \Omega$ and a $d$-Ahlfors regular measure $\sigma$ supported on  $\partial \tilde \Omega$ and  boundedly mutually absolutely continuous with 
$\chi_{\partial \tilde \Omega} \HH^d.$ 

\vv

For any $S \in \SSS$ let   
$$\Gamma_S \subset 2B_S \setminus B_S$$

\noi  be a finite union of separated closed spherical caps such that

\begin{equation}\label{sizeF}
\HH^d(\Gamma_S) = c_{16} \ell(S)^d.
\end{equation}

\noi    Since $B_S$ has diameter $2c_1 \ell(S)$  we can (and do) require   $\Gamma_S$ to be uniformly rectifiable with constants depending only on $c_0,...,c_{16}$ but not on $S$. 
Note that (taking $c_{16}$ carefully) we have  

\begin{equation}\label{starOm}
\omega(p_S,\Gamma_S, \Omega^*) \sim 1/2, 
\end{equation}

\noi for any domain $\Omega^*$ such that
$$(\Omega \setminus \Gamma_S)  \cap B(x_S, c_0 \ell(S)) \subset \Omega^* \subset \Omega.$$

\noi and by (3.4)

\begin{equation}\label{bigstar}
\omega(p_S, S \cup \Gamma_S, \Omega^*) > 1- \varepsilon  
\end{equation}

\noi for all such $\Omega^*.$
\vv
Define $\Omega_0 = \Omega$ and assume $\diam (\partial \Omega) \sim 1$ so that $\partial \Omega = S_0 \in \SSS.$ Fix $\lambda >1$ so that

\begin{equation}\label{lambda}
\lambda -1 < \dist(S,4B_S)
\end{equation}  

\noi and define

$$\widetilde \HD(S_0) = \Bigl\{S_1  \in \SSS, S_1 \subset S_0:
 \omega(p_{S_0}, \lambda (S),\Omega_0) \geq A \Bigl(\frac{\ell(S_1)}{\ell(S_0)}\Bigr)^d, ~ S_1 {\rm {maximal}}\Bigr\},$$  

$$\widetilde \LD(S_0) = \Bigl\{S_1  \in \SSS, S_1 \subset S_0: \omega(p_{S_0}, S_1,\Omega_n) \leq \delta \Bigl(\frac{\ell(S_1)}{\ell(S_0)}\Bigr)^d, S_1 ~{\rm {maximal}}\Bigr\},$$

$$\widetilde G_1 = \widetilde {G_1(S_0)} = \bigl\{S' \in \widetilde HD(S_0) \cup \widetilde \LD(S_0), S' ~~{\rm {maximal}}\bigr\},$$

$$K_1 = S_0 \setminus \bigcup_{\widetilde G_1(S_0)}S,$$ 

$$\tree(S_0) = \bigl\{S \in \SSS: S \not\subset S' ~~{\rm for~~all} ~~ S' \in {\widetilde G_1(S_0)}\bigr\},$$

$$\Omega_1 = \Omega \setminus \bigcup_{\widetilde G_1(S_0)} \Gamma_S,$$

$$\mu_1(.)  = \ell(S_0)^d \chi_{K_1} \omega(p_{S_0},.,\Omega_0),$$

$$\nu_1 = \sum_{\widetilde G_1(S_0)} \chi_{\Gamma_S}\HH^d,$$  
 
\noi and 

$$\sigma_1= \mu_1 + \nu_1.$$

\noi  Then $\sigma_1$  is a finite measure on $\partial\Omega_1$.
 
\vv

For $S \in \SSS$  define

$$S^{1} = S \cup \bigcup\{\Gamma_{S'}: S' \in {\widetilde  G_1}, S' \subset S\}$$

\noi and declare   $\ell(S^{1}) = \ell(S).$

\vv

\begin{lemma}\label{lem 9} There are constants $c_{17}$ and $c_{18}$ such that if $S \in \tree(S_0)$,

\begin{equation}\label{sig1}
c_{17} \ell(S) \leq \sigma_1(S^{1}) \leq c_{18} \ell(S).
\end{equation}

\end{lemma}

\begin{proof} For the upper bound we have
$$\mu_1(S^1) \leq A\frac{\ell(S)^d}{\ell(S_0)^d},$$ 
since $S \in \tree(S_0)$, and 
$$\nu_1(S^1) \leq C_1\ell(S)^d$$
by Lemma 4.1.

\vv

For the lower  bound note that

$$\sigma_1(S^1) = {\ell(S_0)}^d\omega(p_{S_0}, S, \Omega_0) -{\ell(S_0)}^d\sum_{{\widetilde G_1(S_0)} \ni S' \subset S} \omega(p_{S_0},S',\Omega_0)
+ \sum_{{\widetilde G_1(S_0)} \ni S' \subset S} \HH^d(\Gamma_{S'}),$$

\vv  
\noi in which 

$${\ell(S_0)}^d\omega(p_{S_0}, S, \Omega_0) \geq \delta \ell(S)^d$$

\vv

\noi while by the definition of $G_1(S_0)$

$${\ell(S_0)}^d\sum_{{\widetilde G_1(S_0)} \ni S' \subset S} \omega(p_{S_0},S',\Omega) 
\leq C_1 2^{2Nd}A \sum_{{\widetilde G_1(S_0)} \ni S' \subset S}\ell(S')^d.$$

\noi Thus if

\begin{equation}\label{(7.6)}
\sum_{{\tilde G_1(S_0)} \ni S' \subset S}\ell(S)^d \leq \frac{\delta}{C_1 2^{2Nd +1}A} \ell(S)^d
\end{equation}

\noi the lower bound holds with $c_{20} = \frac{\delta}{2}.$  On the other hand,  if (7.6) fails, 
then  $\mu_1(S^{1}) \geq 0$ and 

$$\nu_1(S^1) \geq \frac{c_{16}}{C_1 2^{2Nd +1}A} \delta.$$

\end{proof}

\vv

Now continue by induction.  
For $n \geq 1$ assume we have defined $\widetilde G_n = ~\widetilde G_n(S_0)$,  $\Omega_n$, ~ and $S^n$ for all $S \in \SSS$.
Then for each $S \in \widetilde G_n(S_0)$  define

$$\widetilde \HD(S) = \Bigl\{S_1  \in \SSS, S_1 \subset S: \omega(p_S, \lambda (S_1)^n,\Omega_n) \geq A \Bigl(\frac{\ell(S_1)}{\ell(S)}\Bigr)^d, ~ S_1 {\rm {maximal}}\Bigr\},$$

\noi   

$$\widetilde \LD(S) = \Bigl\{S_1  \in \SSS, S_1 \subset S: \omega(p_S, (S_1)^n,\Omega_n) \leq \delta \Bigl(\frac{\ell(S_1)}{\ell(S)}\Bigr)^d, ~ S_1 {\rm {maximal}}\Bigr\},$$

$$\widetilde G_1(S) = \bigl\{S' \in \tilde \HD(S) \cup \tilde \LD(S),~ S' ~~{\rm {maximal}}\bigr\}$$

$$\tree(S) = \bigl\{S' \in \SSS: S' \subset S,~~ S' \not\subset S_1~~ {\rm {for ~~all}}~~ S_1 \in {\widetilde G_1(S)}\bigr\},$$

$$\widetilde G_{n+1}(S_0) = \bigcup_{G_n(S_0)} \widetilde G_1(S),$$

$$K_{n+1} = \bigcup_{\widetilde G_n(S_0)} \bigl(S \setminus \bigcup_{\widetilde G_1(S)}S_1\bigr),$$

$$\Omega_{n+1} = \Omega_n \setminus \bigcup_{\widetilde G_{n+1}(S_0)} \Gamma_S,$$

$$\mu_{n+1}(.)  = \sum_{S  \in \widetilde G_n} \ell(S)^d \chi_{_{S \cap K_{n+1}}} \omega(p_{S},.,\Omega_n),$$

$$\nu_{n+1} = \sum_{\widetilde G_{n+1}(S_0)} \chi_{\Gamma_S}\HH^d,$$  
 
\noi and define

$$\sigma_{n+1}= \mu_{n+1} + \nu_{n+1}.$$

\vv

\noi  Then $\sigma_{n+1}$  is a finite measure on $\partial\Omega_{n+1}$.

\vv 

For $S \in \SSS$  define

$$S^{n+1} = S^{n} \cup \bigcup\bigl\{\Gamma_{S'}: S' \in \widetilde  G_{n+1}, ~ S' \subset S\bigr\}$$

\noi and  define

$$\ell(S^{n+1}) = \ell(S).$$

\noi Note that by the proof of Lemma 7.1,

\begin{equation}\label{sigman}
c_{19} \ell(S)^d \leq \sigma_{n+1}(S^{n+1}) \leq c_{20}\ell(S)^d
\end{equation}
\vv

\noi for all $S \in \tree(S'), S' \in \widetilde G_n(S_0)$.

\vv

Define 
$\tilde \Omega = \bigcap \Omega_n,$ which, as we will see, is a connected open set, and  

$$\mu = \sum_{n \geq 1}\mu_n,$$

$$\nu= \sum_{n \geq 1} \nu_n,$$

$$\sigma = \mu + \nu,$$

\noi and, for $S \in \SSS.$

$$S^{\infty} = \bigcup S^{n}.$$
\vv

\begin{lemma}\label{lem10}
Let $S \in \widetilde G_n$.  Then  

\begin{equation}\label{HD'}
\sum_{\widetilde \HD(S)} \Bigl(\frac{\ell(S_1)}{\ell(S)}\Bigr)^d \leq \frac{C_1}{A},
\end{equation}
\noi and 
\begin{equation}\label{LD'}
\sum_{\widetilde \LD(S)} \omega(p_S,S_1,\Omega) \leq C\delta + \ve,
\end{equation}
\noi where 

$$\inf_{T \in  \SSS} \inf_{p \in \Gamma_T} \omega (p,T,\Omega) \geq 1 -  \ve.$$   
\end{lemma}

\begin{proof} The proof of (7.8) is the same as the proof of (6.6) because  by Part A of Theorem 2.1  (or 
hypothesis (ii) of Part B of Theorem 1.4) the Vitali argument from that proof can still be applied.    

To prove
(7.9) let $S \in \tilde G_n$ and for $1 \leq k \leq (n-1),$ let $T_k(S)$ be that unique $T \in \tilde G_k$ such that $S \subset T_k.$  Let
$S_1 \in \tilde \LD(S).$  Then $S_1 \subset \partial \Omega \subset \partial \Omega_n$ and

$$\omega(p_S,S_1,\Omega) = \omega(p_S,S_1,\Omega_n) + \sum_{k=1}^n 
\sum_{T \in \tilde G_k \setminus \{S_1\}} \int_{\Gamma_T} \omega(p,S_1,\Omega) d\omega(p_S,p,\Omega_n).$$ 

\noi By definition and Theorem 2.1 Part A,

$$\sum_{S_1 \in \tilde\LD(S)} \omega(p_S,S_1,\Omega_n) \leq \delta \sum_{\LD(S)} 
\Bigl(\frac{\ell(S_1)}{\ell(S)}\Bigr)^d
\leq C\delta,$$

\noi while

\begin{align}
\sum_{S_1 \in \tilde\LD(S)}\sum_{k =1}^n & 
\sum_{T \in \tilde G_k \setminus \{S_1\}} \int_{\Gamma_T} \omega(p,S_1,\Omega) d\omega(p_S,dp,\Omega_n) \nonumber \\  
& = \int_{\Gamma_S} \sum_{S_1 \in \tilde\LD (S)} \omega(p,S_1,\Omega) d\omega(p_S,dp,\Omega_n)
\nonumber \\
& +\sum_{k =1}^n \sum_{T \in \tilde G_k, T \cap S = 
\emptyset}\int_{\Gamma_T} \sum_{S_1 \in \tilde \LD(S)} 
\omega(p,S_1,\Omega) d\omega(p_S,dp,\Omega_n) \nonumber \\
& +\sum_{k=1}^{n-1}\int_{\Gamma_{T_k}} \sum_{\tilde \LD(S)} \omega(p,S_1,\Omega)d\omega(p_S,dp,\Omega_n) \nonumber \\
 &= I + II + III. \nonumber  
\end{align}

\vv

By (7.2) and Harnack's inequality we have 

$$I \leq 2/3 \sum_{LD(S)} \omega(p_S,S_1,\Omega),$$

\noi and we can move term $I$ to the left side of (7.9).

\vv

 For $II$, note that 

$$(S \cup \Gamma_S) \cap \bigcup_{1 \leq k \leq n}\bigcup_{\{T \in \tilde G_k, T \cap S = \emptyset\}} \Gamma_T  = \emptyset$$

\noi so that by (7.3) we have  $II \leq  \varepsilon.$ 

\vv
 For $III$ recall that $\dist(p_{T_k}, S) \geq c_2 2^{N(n-k)} \ell(S)$.  Therefore   

$$B(x_S, c_0 \ell(S)) \cap \bigcup_{1 \leq k \leq n-1} \Gamma_{T_k} = \emptyset,$$

\noi  so that by (1.23) $III < C\ve.$

\vv
   That established (7.9)  and Lemma 7.2.
\end{proof}

\vv

If $C\delta + \varepsilon$ is small, Lemma 7.2 and the proof of Lemma 6.2 yield

\begin{equation}\label{basest}
\sum_{k=1}^{\infty} \sum_{\tilde G_k} \Bigl(\frac{\ell(S_1)}{\ell(S)}\Bigr)^d \leq C_3
\end{equation}
\noi for any $S \in \SSS$.

\vv

By (7.1) and (7.10)  $\tilde \Omega = \bigcup \tilde \Omega_n$ is a connected  open set and 

$$\partial \tilde\Omega = \partial \Omega \cup \bigcup_{n =1}^{\infty} \bigcup_{S  \in \tilde G_n} \Gamma_S.$$

\noi By (7.7) $\sigma$ is a finite measure on $\partial \tilde \Omega$ such that for all $S \in \SSS$

$$c_{21} \ell(S)^d \leq \sigma(S^{\infty}) \leq c_{22} \ell(S)^d$$

\noi and by Lemma 7.1 and the definition of $\nu_{n+1}$ 

$$\sigma(E) = \HH^d(E)$$

\noi for all Borel $E \subset \bigcup \Gamma_S.$  In view of properties (1.13) and (1.17) of $\SSS$, these imply that 
$\sigma$ is a $d-$Ahlfors regular measure with closed support $\partial \tilde \Omega$
and hence that $\partial \tilde \Omega$ is $d$-Ahlfors regular.
Moreover, the family

$$\SSS^{\infty} = \bigcup_{S \in \SSS} S^{\infty} \cup \bigcup_{S \in \cup_n{\tilde G_n}}\FF_S,$$

\noi where $\FF_S$ is the dyadic decomposition of $\Gamma_S$ in spherical coordinates, is a family of Christ-David cubes for $\partial \tilde\Omega.$ 

\vv
\section{Proof of Theorem 1.1 Part B.}

To prove Theorem 1.2 Part B we assume $\Omega$ is a corkscrew domain satisfying (1.4) and either {\rm (a)} or {\rm (b)} and we let 
$\tilde \Omega$ be the domain constructed from $\Omega$ in Section 7. Recall that $\partial {\tilde \Omega}$
is $d$-Ahlfors regular.  We will prove  $\partial {\tilde \Omega}$ is uniformly rectifiable by repeating the proof of Lemma 6.2 and applying Proposition 5.1 of [GMT]. Define $G^*_0(S_0^{\infty}) = \{S_0^{\infty}\}$ and by induction, for $S^{\infty} \in G^*_n$
 define

$$\HD(S^{\infty}) = \Bigl\{S_1^{\infty}  \in \SSS^{\infty}: S_1^{\infty} \subset S^{\infty}, \omega(p_S,
\lambda S_1^{\infty},\tilde \Omega) \geq A \Bigl(\frac{\ell(S_1)}{\ell(S)}\Bigr)^d, 
S_1^{\infty} ~{\rm{maximal}}\Bigr\},$$

$$\LD(S^{\infty}) = \Bigl\{S_1^{\infty}  \in \SSS^{\infty}: S_1^{\infty} \subset S^{\infty}, 
\omega(p_S,S_1^{\infty},\tilde \Omega) \leq \delta \Bigl(\frac{\ell(S_1)}{\ell(S)}\Bigr)^d, S_1^{\infty} ~ 
{\rm {maximal}}\Bigr\},$$

$${G^*_1(S^{\infty})} = \bigl\{S_1^{\infty} \in \HD(S^{\infty}) \cup \LD(S^{\infty}), S_1^{\infty} ~~{\rm maximal}\bigr\},$$

$$\tree(S^{\infty}) = \bigl\{S_1^{\infty} \in \SSS: S_1^{\infty} \subset S^{\infty}, ~~S_1^{\infty} \not\subset S_2^{\infty} ~~{\rm {for~~ all}}~~ S_2^{\infty} \in {G^*_1(S^{\infty})}\bigr\},$$  

\noi and  

$$G^*_{n+1} = \bigcup_{S^{\infty} \in G_n^*} G^*_1(S^{\infty}).$$

\vv

\begin{lemma}\label{lem11}
Let $S^{\infty} \in G^*_n$.  Then  

\begin{equation}\label{HD*}
\sum_{S_1^{\infty} \in \HD(S^{\infty})} \Bigl(\frac{\ell(S_1)}{\ell(S)}\Bigr)^d 
\leq \frac{C_1}{A},
\end{equation}
\noi and 
\begin{equation}\label{LD*}
\sum_{S_1^{\infty} \in \LD(S^{\infty})} \omega(p_S,S_1,\Omega) \leq C\delta + \ve,
\end{equation}
\noi where 

$$\inf_{T \in  \SSS} \inf_{p \in \Gamma_T} \omega (p,T,\Omega) \geq 1 -  \ve.$$   
\end{lemma}

\medskip
\begin{proof} The proof of (8.1) is the same as the proof of (6.8).  
To prove 
(8.2) we follow the proof of (6.9) and (7.9).  Let $S_1^{\infty} \in \LD(S^{\infty}).$  Then 

$$\omega(p_S,S_1,\Omega) \leq \omega(p_S,S_1^{\infty},\tilde \Omega) + \sum_{k \geq 1}
\sum_{G_k^* \setminus \{S_1\}} \int_{\Gamma_T} \omega(p,S_1,\Omega) d\omega(p_S,p,\tilde \Omega).$$ 

\noi By definition and Theorem 1.2, Part A, 

\begin{equation}\label{(8.3)}
\sum_{S_1^{\infty} \in \LD(S^{\infty})} \omega(p_S,S_1^{\infty},\tilde\Omega) 
\leq \delta \sum_{\LD(S^{\infty})} \Bigl(\frac{\ell(S_1^{\infty})}{\ell(S^{\infty})}\Bigr)^d
\leq C\delta,
\end{equation}

\noi and 


\begin{align}
 \sum_{S_1^{\infty} \in \LD(S^{\infty})} & ~~ \sum_{k =1}^{\infty} ~~ 
\sum_{T \in G^*_k \setminus \{S_1\}} 
\int_{\Gamma_T} \omega(p,S_1,\Omega) d\omega(p_S,dp,\tilde\Omega) \nonumber \\
 & =  \int_{\Gamma_S} \sum_{S_1^{\infty} \in \LD (S^{\infty})} \omega(p,S_1,\Omega) d\omega(p_S,dp,\tilde \Omega) \nonumber \\ 
 & + \sum_{k =1}^{\infty} ~~ \sum_{T \in G^*_k, T \cap S = \emptyset} ~~ \int_{\Gamma_T} \sum_{S_1^{\infty} \in \LD(S^{\infty})} 
\omega(p,S_1,\Omega) d\omega(p_S,p,\tilde \Omega) \nonumber \\ 
& +\sum_{k=1}^{n-1}\int_{\Gamma_{T_k}} 
\sum_{S_1^{\infty} \in \LD(S^{\infty})}\omega(p,S_1,\Omega)d\omega(p_S,p,\tilde \Omega) \nonumber \\ 
& +\sum_{S_1 \in G^*_1(S)} ~~
\sum_{T \in \bigcup_k  G^*_k(S_1)} ~~\int_{\Gamma_T} \omega(p,S_1,\Omega) 
d\omega(p_S, p, \tilde \Omega) \nonumber \\
& = I' + II' + III' + IV'. \nonumber  
\end{align}

\noi Here $I'$, $II'$ and $III'$ can be handled the same way as $I$, $II$, and $III$ were, while 
$IV' \leq C\varepsilon$ by (8.3). 
\end{proof}

Thus if  $C\delta +  \varepsilon$ is small, Lemma 7.2 and  Lemma 6.3 yield

\begin{equation}\label{basest*}
\sum_{k=1}^{\infty} \sum_{G^*_k} \Bigl(\frac{\ell(S_1)}{\ell(S)}\Bigr)^d \leq C_3
\end{equation}

\noi for any $S^{\infty} \in \SSS^{\infty}$ and  any $S^{\infty}_1 \in \tree(S^{\infty})$,

\begin{equation}\label{8.5}
\delta \Bigl(\frac{\ell(S_1)}{\ell(S)}\Bigr)^d \leq \omega(p_S, \lambda S_1^{\infty},\tilde \Omega)
\leq A\Bigl(\frac{\ell(S_1}{\ell(S)}\Bigr)^d. 
\end{equation}
 
 By (8.5) and  Proposition 5.1 of [GMT] this proves $\partial \tilde \Omega$ is  uniformly rectifiable,  and that establishes Part B of Theorem 1.1.    

\vv

\section {Proof of  Theorem 1.4 Part B and Theorem 1.2 Part B.}

To prove Part B of Theorem 1.4 note that under its hypotheses the arguments in Section 7 and Section 8 show that the constructed domain $\tilde \Omega$ has  uniformly rectifiable boundary. Therefore by Part A of Theorem 1.1, {\rm (a)} and {\rm (b)} hold for
$\Omega$. 

\vv
To prove Part B of Theorem 1.2 note that its hypotheses imply Proposition 1.3  and hence  condition (ii) of Part B of Theorem 1.4. Then the argument in Section 6 yields (1.29), so that  Part B of Theorem 1.4 implies Part B of Theorem 1.2.












\vv




\end{document}